\DeclareMathAlphabet{\mathpzc}{OT1}{pzc}{m}{it}
\newtheorem{theorem}{Theorem}[section]
\newtheorem{lemma}[theorem]{Lemma}
\newtheorem{corollary}[theorem]{Corollary}
\newtheorem{proposition}[theorem]{Proposition}
\theoremstyle{definition}
\theoremstyle{remark}
\newtheorem{remark}[theorem]{Remark}
\numberwithin{equation}{section}
\def\Xint#1{\mathchoice
 {\XXint\displaystyle\textstyle{#1}}%
 {\XXint\textstyle\scriptstyle{#1}}%
 {\XXint\scriptstyle\scriptscriptstyle{#1}}%
 {\XXint\scriptscriptstyle\scriptscriptstyle{#1}}%
 \!\int}
\def\XXint#1#2#3{{\setbox0=\hbox{$#1{#2#3}{\int}$}
 \vcenter{\hbox{$#2#3$}}\kern-.5\wd0}}
\def\dashint{\Xint-}
\newcommand{\definedas}{\mathrel{\raise.095ex\hbox{\rm :}\mkern-5.2mu=}}
\let\epsilon\varepsilon
\begin{document}

\title[Scalar Curvature Flow]{Scalar Curvature Flow on $S^n$ to a Prescribed Sign-changing Function}


\author[H. Zhang]{Hong Zhang}
\address[H. Zhang]{School of Mathematics, University of Science and Technology of China, No.96 Jinzhai Road, Hefei, Anhui, China, 230026.}
\email{\href{mailto: H. Zhang <matzhang@ustc.edu.cn>}{matzhang@ustc.edu.cn}}
\thanks{}

\subjclass[2010]{Primary 53C44; Secondary 35J60}

\keywords{Scalar curvature flow, prescribed scalar curvature, n-sphere, sign-changing function}

\date{\today \ at \currenttime}


\setpagewiselinenumbers
\setlength\linenumbersep{110pt}

\maketitle
\begin{abstract}
  In this paper, we consider the problem of prescribing scalar curvature on n-sphere. Assume that the candidate curvature function $f$, which is allowed to change sign, satisfies some kind of Morse index or symmetry condition. By studying the well-known scalar curvature flow, we are able to prove that the flow converges to a metric with the prescribed function $f$ as its scalar curvature.
\end{abstract}

\section{Introduction}
The problem of prescribing certain curvature on a compact manifold with or without boundary has always been one of the most active topics in conformal geometry during the past few decades, see for instance \cite{bfr, br, cl, cgy, cy1, cy2, mu, xz} and references therein. Among them, a typical model is the prescribed scalar curvature problem on a close manifold of dimension $n\geqslant3$, which can be described as follows

Let $(M,g_0)$ be an $n\geqslant3$ dimensional and close manifold with the background Riemannian metric $g_0$ and $f$ a smooth function on $M$. Then, one may ask whether $f$ can be realized as the scalar curvature of some metric $g$ conformally related to the metric $g_0$. If we write $g=u^{4/(n-2)}g_0$, then the problem of prescribing scalar curvature is equivalent to finding a positive solution of the semi-linear PDE:
\begin{equation}
  \label{sce0}
  -c_n\Delta_{g_0}u+R_0u=f(x)u^{2^*-1},
\end{equation}
where $c_n=4(n-1)/(n-2)$, $2^*=2n/(n-2)$, $\Delta_{g_0}$ and $R_0$ are, respectively, the Laplace-Beltrami operator and the scalar curvature of the background metric $g_0$.

It is well known that the nature of the solutions of Eq.\eqref{sce0} depends on the so-called Yamabe invariant which is given by
\begin{equation*}
  Y(M)=\underset{u\in C^\infty(M),u>0}{\inf}\frac{\int_{S^n}c_n|\nabla u|_{g_0}^2+R_0u^2~d\mu_{g_0}}{\bigg(\int_{S^n}u^{2^*}~d\mu_{g_0}\bigg)^\frac{n-2}{n}}
\end{equation*}
Loosely speaking, the case of $Y(M)<0$ is well understood by a series of works due to Kazdan-Warner \cite{K-W-1975}, Ouyang \cite{O-1991,O-1992} and Rauzy \cite{ra}. When $Y(M)=0$, Kazdan-Warner \cite{K-W-1975} conjectured that if $f\not\equiv0$, then Eq.\eqref{sce0} possesses a positive solution if and only if $\max f>0$ and $\int_Mf~d\mu_{g_0}<0$. Schoen \& Escobar \cite{es} confirmed this conjecture for $n=3$ or $4$. Later, Jung \cite{jung} claimed to solve the conjecture by the method of sup-sub solutions. However, Druet \cite{druet} found a serious gap in his proof. As far as we know, this conjecture has been open until now for $n\geqslant5$. For the case of $0<Y(M)<Y(S^n)$, Schoen \& Escobar \cite{es} showed that if $n=3$, then any prescribed function $f$ positive somewhere can be the scalar curvature of some conformal metric on $M$. While for higher dimensional case, they require $f$ to satisfy some kind of flatness condition: all derivatives up to $n-2$ order vanish at some maximum point of $f$. The most subtle case is when the underlying manifold is the standard $n$-sphere. In this case the equation \eqref{sce0} becomes
\begin{equation}
  \label{sce}
-c_n\Delta_{S^n}u+n(n-1)u=f(x)u^{2^*-1},
\end{equation}
This equation has been extensively studied and various results have been known. Among many others, we refer the reader to \cite{bc, bo, cy3, le, li1, li2, ma, sz} and the literature therein. One of interesting researches among them is due to Chen \& Xu \cite{cx}. To state their result, we firstly define the map from the $n+1$-dimensional ball $\mathbb{B}^{n+1}$ to $\mathbb{R}^{n+1}$ by
$$G(p,s)=\dashint_{S^n}xf\circ\phi_{p,s}~d\mu_{S^n}.$$
where $(p,s)\in S^n\times[1,+\infty)$ identified with $\mathbb{B}^{n+1}$ through the map $(p,s)\in S^n\times[1,+\infty)\mapsto (s-1)p/s$ and $\phi_{p,s}$ is the conformal transformation on $S^n$. This definition of the map $G$ is given, by Chang \& Yang, in \cite{cy3}. Now, it is the right time to describe their result. Specifically, they have the following result
\begin{theorem}[Chen and Xu]
  \label{cx}
  Suppose $n\geqslant3$ and $f: S^n\rightarrow \mathbb{R}$ is a smooth positive Morse function with only non-degenerated critical points. Further, suppose $\delta_n=2^{2/n}$ if $n=3, 4$ or $=2^{2/(n-2)}$ if $n\geqslant5$. If $f$ satisfies the simple bubble condition, namely,
  \begin{equation}\label{sbc}
    \max_{S^n}f/\min_{S^n}f<\delta_n,
  \end{equation}
   and the degree condition
   \begin{equation}\label{degree}
   \mbox{Deg}(G,\mathbb{B}^{n+1},0)\neq0,
   \end{equation}
   then there exists a positive solution of the scalar curvature equation \eqref{sce}.
\end{theorem}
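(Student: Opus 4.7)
My plan is to attack Theorem \ref{cx} via the scalar curvature flow advertised in the abstract. Writing $g(t) = u(t)^{4/(n-2)} g_0$ on $S^n$ with $g_0$ the round metric, I will study the volume-normalized flow
\[
\partial_t u = -\tfrac{n-2}{4}\bigl(R_{g(t)} - \alpha(t) f\bigr) u,\qquad \alpha(t) = \frac{\int_{S^n} R_{g(t)}\, d\mu_{g(t)}}{\int_{S^n} f\, d\mu_{g(t)}},
\]
starting from a smooth $u_0>0$ normalized so that $\vol(g(t)) \equiv \vol(S^n, g_0)$ for all $t$. Short-time existence is standard parabolic theory; the goal is to promote it to long-time existence and subsequential smooth convergence to a stationary point, which upon absorbing $\alpha_\infty$ into $f$ satisfies \eqref{sce}.

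The first step is energy monotonicity. The weighted Yamabe functional $E[g] = \int_{S^n} R_g\, d\mu_g / (\int_{S^n} f\, d\mu_g)^{(n-2)/n}$ is non-increasing along the flow, with $\tfrac{d}{dt}E \le 0$ and equality only at stationary points. Combined with the preserved volume, this gives a uniform $H^1$ bound on $u$. If $\|u(t)\|_{L^\infty}$ stays bounded, parabolic regularity yields smooth subsequential convergence to a positive solution of \eqref{sce}, and we are finished.

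The substantive work is in excluding blow-up. Via concentration-compactness in the spirit of Struwe and Brendle, any failure of compactness of $\{u(t)\}$ decomposes, after rescaling by conformal transformations $\phi_{p_i(t), s_i(t)}$, into a finite collection of standard bubbles. The simple-bubble condition \eqref{sbc} is exactly what excludes multi-bubble profiles: the energy of a $k$-bubble configuration scales like $k\cdot n(n-1)\omega_n^{2/n}/(\max_{S^n} f)^{(n-2)/n}$, which for $k\ge 2$ exceeds the initial energy bound $n(n-1)\omega_n^{2/n}/(\min_{S^n} f)^{(n-2)/n}$ precisely when $\max f/\min f < \delta_n$. Hence, at most one bubble can form along the flow, and it is pinned down by parameters $(p(t), s(t))\in S^n\times[1,+\infty)\cong \mathbb{B}^{n+1}$.

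The final step is the degree-theoretic contradiction. Assume, toward a contradiction, that no stationary solution exists and a single bubble concentrates as $t\to\infty$. A Kazdan--Warner/Pohozaev identity applied along the flow identifies the center-of-mass map $G(p,s)$ with a positive multiple of the reduced gradient of $E$ in the bubble parameters; in particular, bubble concentration requires $G(p_\infty, s_\infty)=0$ at the limit. A homotopy argument along the flow lines then computes $\mathrm{Deg}(G, \mathbb{B}^{n+1}, 0) = 0$, contradicting \eqref{degree}. The main obstacle is exactly this last matching step: establishing a sharp asymptotic equivalence between the flow's drift in parameter space and $G$, and preventing $(p(t), s(t))$ from oscillating pathologically as it approaches $\partial \mathbb{B}^{n+1}$, so that the homotopy and the resulting degree computation are rigorously justified.
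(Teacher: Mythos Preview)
Note first that Theorem~\ref{cx} is a cited result of Chen--Xu; the present paper does not re-prove it but describes their method (scalar curvature flow, blow-up analysis, and ``an infinitely dimensional Morse theory trick due to Malchiodi \& Struwe'') and explicitly records that Chen--Xu argue via the Morse index condition~\eqref{Morseindex}, showing its equivalence with the degree condition~\eqref{degree} separately. Your flow set-up, energy monotonicity, and single-bubble reduction under~\eqref{sbc} are correct and match this description.

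The divergence, and the genuine gap, is in your final topological step. You propose to deduce $\mathrm{Deg}(G,\mathbb{B}^{n+1},0)=0$ by a ``homotopy along flow lines'', treating $G$ as the reduced gradient of $E$ in the bubble parameters $(p,s)$. But the flow lives on an infinite-dimensional function space, not on $\mathbb{B}^{n+1}$; to extract a finite-dimensional homotopy you would need a well-defined, continuous projection from flow trajectories to $(p(t),s(t))$, valid uniformly for \emph{all} initial data in a suitable class, together with control of $G$ along the image. You acknowledge this obstacle but do not resolve it, and in fact concentration drives $(p,s)$ to $\partial\mathbb{B}^{n+1}$, where $G$ need not vanish and where interior degree is not computed. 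Chen--Xu (and this paper, in Proposition~\ref{homotopy} and the proof of Theorem~\ref{main}) avoid this entirely: they show that blow-up forces the limit point $Q$ to satisfy $\nabla f(Q)=0$, $\Delta_{S^n}f(Q)<0$, and then run Morse theory on the sub-level sets $L_\rho$ of $E_f$, obtaining the Morse polynomial identity~\eqref{systemequation}. This yields a nontrivial solution of~\eqref{Morseindex}, contradicting the hypothesis; the degree condition~\eqref{degree} enters only through its a priori equivalence with~\eqref{Morseindex}. Your sketch is missing this Morse-theoretic mechanism (contractibility of $L_{\gamma_0}$, cell attachments at the levels $\gamma_i$), which is where the actual contradiction is produced.
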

To prove this theorem, they studied carefully the scalar curvature flow
\begin{equation}\label{flow}
  \frac{\partial}{\partial t}g(t)=-\big(R_{g(t)}-\alpha(t)f\big)g(t),
\end{equation}
where $\alpha(t)$ is chosen to fix the volume of $g(t)$ along the flow. We should point out that using the method of flow to study prescribed curvature problems originated in \cite{br} by S. Brendle. By a contradiction argument plus an infinitely dimensional Morse theory trick due to Malchiodi \& Struwe \cite{ms}, they succeeded in showing the convergence of the flow \eqref{flow}. One thing we have to clarify here is that, instead of condition \eqref{degree}, they used the Morse index condition \eqref{Morseindex} below in proving their result. They, in fact, showed that conditions \eqref{degree} and \eqref{Morseindex} are equivalent under the assumptions on $f$ in the theorem (see \cite[Remark 1.2]{cx}).

When the prescribed function $f$ possesses some kind of symmetry, for instance, reflection or rotation, Leung \& Zhou \cite{lz} obtained an interesting result. Their proof is heavily relying on the asymptotic behavior of the flow \eqref{flow} proved by Chen \& Xu in \cite{cx}. Let us state Leung \& Zhou's result
\begin{theorem}[Leung and Zhou]
  \label{lz}
  Suppose $f>0$ is a smooth function on $S^n$ which is invariant under a mirror symmetry upon a hyperplane $\mathscr{H}\subset\mathbb{R}^{n+1}$ ($\mathscr{H}$ passes through the origin) or a rotation of angle $\theta/k$ (with axis being a straight line in $\mathbb{R}^{n+1}$ passing through the origin and $k>1$ being an integer). Let $\Sigma$ be the fixed points set under the action of symmetries above. Assume that there exists a point $y\in\Sigma$ with $f(y)=\max_\Sigma f$ and $\Delta_{S^n}f(y)>0$ and that
  \begin{equation}\label{sbc1}
  \max_{S^n}f/\max_\Sigma f<2^{2/(n-2)},
  \end{equation}
  then there exists a smooth positive solution of Eq.\eqref{sce}.
\end{theorem}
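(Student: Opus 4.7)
The plan is to run the scalar curvature flow~\eqref{flow} within the $\mathcal{G}$-invariant class, where $\mathcal{G}\subset O(n+1)$ is the finite group generated by the given reflection (so $|\mathcal{G}|=2$) or rotation (so $|\mathcal{G}|=k$) and $\Sigma$ is its fixed set, and then to combine Chen--Xu's bubble analysis with the symmetry and the strict bound~\eqref{sbc1} to exclude every bubbling scenario. I take as initial datum the standard Aubin bubble $u_0=B_{y,\lambda_0}$ with $\lambda_0$ large; since $y\in\Sigma$ is fixed by $\mathcal{G}$, $u_0$ is automatically $\mathcal{G}$-invariant, and its Yamabe-type energy $J[u_0]$ can be made arbitrarily close from above to $Y(S^n)(\max_\Sigma f)^{-(n-2)/n}$. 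Because $\mathcal{G}$ acts by isometries of $(S^n,g_{S^n})$ and preserves $f$, the flow~\eqref{flow} is $\mathcal{G}$-equivariant; uniqueness of the parabolic Cauchy problem then ensures that $g(t)$ remains $\mathcal{G}$-invariant for all $t\geq0$.

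By Chen--Xu's asymptotic analysis~\cite{cx}, along some subsequence $t_\ell\to\infty$ either $u(t_\ell)$ converges smoothly to a positive solution of~\eqref{sce}---in which case we are done---or a Struwe-type decomposition holds with $N$ bubbles $B_{p_i,\lambda_i^\ell}$ ($\lambda_i^\ell\to\infty$) plus a vanishing residual. Suppose the latter. $\mathcal{G}$-invariance of $u(t_\ell)$ forces the centres to split into $\mathcal{G}$-orbits $\mathcal{O}_1,\dots,\mathcal{O}_s$ of sizes $m_j$, with $m_j=1$ iff $\mathcal{O}_j\subset\Sigma$ and $m_j\geq2$ otherwise; let $f_j$ denote the common value of $f$ on $\mathcal{O}_j$. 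The usual well-separated-bubble computation gives
\begin{equation*}
  J_\infty\;=\;Y(S^n)\,N\Bigl(\sum_{j=1}^s m_j f_j\Bigr)^{-(n-2)/n},\qquad N=\sum_{j=1}^s m_j,
\end{equation*}
and monotone decay of $J$ along~\eqref{flow} yields $J_\infty\leq Y(S^n)(\max_\Sigma f)^{-(n-2)/n}+o(1)$. Rearranging gives $\sum_j m_j f_j\geq N^{n/(n-2)}\max_\Sigma f$, which combined with $\sum_j m_j f_j\leq N\max_{S^n}f$ yields $\max_{S^n}f/\max_\Sigma f\geq N^{2/(n-2)}$; the strict inequality~\eqref{sbc1} then forces $N=1$. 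Hence a single bubble survives, centred at some $p\in\Sigma$ with $f(p)=\max_\Sigma f$.

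For a single-bubble concentration, the finite-dimensional reduction implicit in~\cite{cx} describes the tail of the flow by a system on $\Sigma\times[1,\infty)$ whose leading-order drift has the form $\dot p\sim-c_1\nabla_\Sigma f(p)/\lambda$ and $\dot\lambda/\lambda\sim -c_2\,\Delta_{S^n}f(p)/\lambda^2$, with positive constants $c_1,c_2$. Starting from $u_0=B_{y,\lambda_0}$, the centre begins at $y$, which is a critical point of $f|_\Sigma$; continuity and the hypothesis $\Delta_{S^n}f(y)>0$ keep the trajectory inside a neighbourhood where $\Delta_{S^n}f>0$, so $\dot\lambda/\lambda<0$ along the flow. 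This contradicts $\lambda_i^\ell\to\infty$, so the bubbling alternative is excluded and $g(t)$ converges to a $\mathcal{G}$-invariant conformal metric; after the standard rescaling absorbing $\alpha_\infty$, this produces the desired smooth positive solution of~\eqref{sce}.

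The step I expect to be the main obstacle is the control of the reduced trajectory in the last paragraph: ensuring that the centre $p(t)$ does not drift along $\Sigma$ away from $y$ towards another $\Sigma$-maximizer of $f$ at which $\Delta_{S^n}f$ might be non-positive. Making this rigorous requires uniform-in-time estimates on the reduced ODE via a Lyapunov functional tailored to the geometry of $f$ near $y$, genuinely exploiting the strict positivity $\Delta_{S^n}f(y)>0$.
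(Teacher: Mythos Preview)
Your framework through the single-bubble reduction is essentially correct and parallels what the paper (and Leung--Zhou) do: run the flow in the $\mathcal{G}$-invariant class, use the simple-bubble bound together with the symmetry to force concentration at a unique $Q\in\Sigma$, and identify the limiting energy as $n(n-1)f(Q)^{-(n-2)/n}$. The gap is entirely in your final paragraph.

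The shadow-flow ODE reduction you invoke is \emph{not} what Chen--Xu establish, and it is not how the argument closes. Their analysis does not produce an explicit system $\dot p\sim -c_1\nabla_\Sigma f(p)/\lambda$, $\dot\lambda/\lambda\sim -c_2\Delta_{S^n}f(p)/\lambda^2$; what it actually delivers (recorded here as Proposition~\ref{asymptoticbehavior}(v)) is a global a~priori constraint on the concentration point, obtained from a Kazdan--Warner type integral identity: necessarily $\nabla_{S^n}f(Q)=0$ and $\Delta_{S^n}f(Q)<0$. This, combined with the energy comparison, is what excludes bubbling. No Lyapunov functional for a reduced ODE near $y$ is needed, and your drift concern simply does not arise, because one never tracks the center along the flow.

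You also have the role of the hypothesis $\Delta_{S^n}f(y)>0$ slightly backwards. You assert that $J[u_0]$ can be made ``arbitrarily close from above'' to $Y(S^n)(\max_\Sigma f)^{-(n-2)/n}$; but the second-order term in the bubble expansion at $y$ is proportional to $-\Delta_{S^n}f(y)$, so the hypothesis lets you place $E_f[u_0]$ \emph{strictly below} the critical level $n(n-1)(\max_\Sigma f)^{-(n-2)/n}$. Monotone decay plus Proposition~\ref{asymptoticbehavior}(iv) then force $f(Q)>\max_\Sigma f$, which is impossible for $Q\in\Sigma$---and the proof is finished. In the paper's treatment of its own generalization (Theorem~\ref{main1}, case $2^\circ$) this is packaged slightly differently: one quotes from \cite[\S3c]{lz} a strict gap $\max_\Sigma f>f(Q)+\alpha$, using that $\Delta_{S^n}f(Q)\leq 0$ while $\Delta_{S^n}f(y)>0$, and then lets $\epsilon<\alpha$ in the initial-energy bound. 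Either way, the Laplacian sign enters through the energy comparison and the asymptotic characterization of $Q$, not through any dynamics of a reduced bubble center.
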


The aim of the current paper is, by reconsidering the scalar curvature flow \eqref{flow}, to generalize the results in Theorems \ref{cx} and \ref{lz} to the case that the prescribed function $f$ is allowed to change sign. Precisely speaking, our first result can be stated as follow
\begin{theorem}\label{main}
 Suppose $f(x)$ is a smooth Morse function on $S^n$ satisfying the following conditions:
 \begin{itemize}
   \item[(i)]
   $
   \begin{cases}
    f(x)\geqslant0~~\mbox{on}~~S^n~~\mbox{and}~~\int_{S^n}f~d\mu_{S^n}>0,&\mbox{if}~~n=3,\\[0.3em]
    \int_{S^n}f~d\mu_{S^n}>0,&\mbox{if}~~n\geqslant4;
   \end{cases}
   $
   \item[(ii)]
    $(\max_{S^n}|f|)\big/\big(\dashint_{S^n}f~d\mu_{S^n}\big)<2^\frac{2}{n}$;
   \item[(iii)] $|\nabla f|_{g_{S^n}}^2+(\Delta_{S^n}f)^2\neq0$;
   \item[(iv)] The following algebraic system has no non-trivial solutions
   \begin{equation}\label{Morseindex}
   m_0=1+k_0, m_i=k_{i-1}+k_i, 1\leqslant i\leqslant n, k_n=0,
   \end{equation}
   with coefficients $k_i\geqslant0$ and $m_i$ defined as
   \begin{equation}\label{Morseindex1}
   m_i=\#\{x\in S^n; f(x)>0, \nabla_{g_{S^n}}f(x)=0, \Delta_{g_{S^n}}f(x)<0,~~\mbox{ind}_f(x)=n-i\},
   \end{equation}
   where $\mbox{ind}_f(x)$ denotes the Morse index of $f$ at critical point $x$,
 \end{itemize}
then $f$ can be realized as the scalar curvature of some metric $g$ in the conformal class of $g_{S^n}$, i.e., Eq.\eqref{sce} possesses a positive solution.
\end{theorem}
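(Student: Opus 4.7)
The plan is to follow the scalar curvature flow strategy of Chen and Xu in Theorem \ref{cx}, adapted to the sign-changing setting. Writing $g(t)=u(t)^{4/(n-2)}g_{S^n}$ with $u(0)\equiv 1$, the flow \eqref{flow} takes the form
\begin{equation*}
u_t=\tfrac{n-2}{4}\bigl(\alpha(t)f-R_{g(t)}\bigr)u,
\end{equation*}
with $\alpha(t)$ chosen so that $\vol(g(t))$ is constant. My first task is to establish long-time existence and, crucially, to show that along the flow the integral $\int_{S^n}f\,d\mu_{g(t)}$ stays strictly positive so that $\alpha(t)$ is uniformly bounded above and below by positive constants. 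This relies on condition (ii): if $\int f\,d\mu_{g(t)}$ were to approach zero, concentration-compactness would force $u(t)$ to localize on a set where $f$ is not appreciably positive, and comparing against $\max|f|/\bar f<2^{2/n}$ rules this out. Under this control, the modified Yamabe energy $E(u)=\int_{S^n}\bigl(c_n|\nabla u|^2+n(n-1)u^2\bigr)\,d\mu_{S^n}$ is non-increasing, yielding a uniform $H^1$ bound.

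Given this bound, a Struwe-type decomposition applied to any subsequence $u(t_j)$ shows that either $u(t_j)$ converges in $H^1$ to a positive smooth solution of \eqref{sce} (and we are done), or it splits as a weak limit plus finitely many bubbles. Condition (ii) is the sharp threshold preventing two or more simultaneous bubbles, since the Sobolev-type cost of a bubble concentrating at $p$ is calibrated by $f(p)$ while $\bar f$ constrains the total budget available through the volume-normalization. At the same time one must verify that each bubble localizes to $\{f>0\}$: a bubble at a point with $f(p)\leq 0$ would contribute a strictly negative amount to $\int f u^{2^*}\,d\mu_{S^n}$, destroying the positivity established in the first step. This reduces the analysis to the single-bubble scenario at a point where $f(p)>0$.

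The heart of the argument is then the single-bubble dynamics. Following the Malchiodi--Struwe finite-dimensional reduction used in \cite{cx}, I would parametrize the late-time flow by $(p(t),\lambda(t))\in S^n\times[1,\infty)$ and derive effective ODEs of the schematic form
\begin{equation*}
\dot p\;\sim\;-\frac{c_1}{\lambda^{2}}\nabla f(p),\qquad \frac{\dot\lambda}{\lambda}\;\sim\;\frac{c_2}{\lambda^{2}}\Delta_{S^n} f(p),
\end{equation*}
modulo controlled errors. Concentration ($\lambda\to\infty$) therefore requires $\nabla f(p)=0$ and $\Delta_{S^n}f(p)<0$, and the non-degeneracy (iii), which rules out critical points with $\Delta_{S^n}f=0$, ensures that the candidate blow-up set is finite and that each of its points is hyperbolic. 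These are precisely the points counted by $m_i$ in \eqref{Morseindex1}.

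To conclude, I would implement a Morse-theoretic contradiction on the compactified parameter disk $\mathbb{B}^{n+1}\cong S^n\times[1,\infty]$. The late-time flow, composed with the reduction, supplies a pseudo-gradient vector field whose rest points on the boundary $\{\lambda=\infty\}$ are the critical points of \eqref{Morseindex1}, each contributing a cell whose index is determined by $\mathrm{ind}_f(p)$ shifted by one (the unstable $\lambda$-direction). The Morse inequalities associated with this CW structure on the disk collapse exactly to the algebraic system \eqref{Morseindex}, and condition (iv) asserts that this system has no non-trivial solution. Hence the bubbling alternative is impossible, the flow converges in $H^1$, and standard elliptic regularity promotes the limit to a smooth positive solution of \eqref{sce}. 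I expect the main obstacle to be the first step: verifying persistence of $\int f\,d\mu_{g(t)}>0$ and the localization of any bubble to $\{f>0\}$, together with the sharp two-bubble threshold in the sign-changing setting. All three of these facts are automatic under the pointwise positivity assumed in \cite{cx}, and here they must be re-established from the integral positivity in (i) combined with the refined simple bubble bound (ii); condition (i) is split between $n=3$ and $n\geqslant 4$ precisely because in dimension three the Green's function asymptotics are too weak to absorb the $\{f<0\}$ contribution without the pointwise sign hypothesis.
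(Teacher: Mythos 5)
Your outline follows the same broad strategy as the paper (volume-normalized scalar curvature flow, concentration--compactness to reduce to a single bubble at a positive critical point with $\Delta_{S^n}f<0$, then Morse theory against condition (iv)), but the three places you yourself flag as ``the main obstacle'' are exactly where your sketch stops short of a proof, and in two of them the mechanism you propose is not the one that works. First, the persistence of $\int_{S^n}fu^{2^*}\,d\mu_{S^n}>0$ does not require any concentration--compactness argument: it follows in three lines from the monotonicity of $E_f[u]$ together with the sharp Sobolev inequality, which gives the quantitative lower bound \eqref{poslowerbound}; your proposed route (``if the integral approached zero, the mass would localize where $f$ is not positive'') is circular in flavor, since the blow-up analysis you would invoke already presupposes global existence and the boundedness of $\lambda(t)$, which in turn rest on that positive lower bound. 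Second, your diagnosis of the $n=3$ versus $n\geqslant4$ dichotomy (``Green's function asymptotics'') is wrong. The actual issue is the differential inequality for $F_2(t)=\dashint_{S^n}(\lambda f-R)^2\,d\mu_g$: the term $\tfrac{4-n}{2}\dashint R(\lambda f-R)^2\,d\mu_g$ has the favorable sign only for $n\geqslant4$, which is what yields $|\lambda'(t)|\leqslant\Lambda_0$ (Lemma \ref{boundofF_2}); for $n=3$ one only gets a one-sided bound $\lambda'(t)\leqslant\Lambda_0$, and the maximum-principle lower bound for $R$ then forces the pointwise hypothesis $f\geqslant0$ so that $\lambda'f$ stays controlled from above.

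The most serious omission is in the Morse-theoretic endgame. Condition (iv) is useless unless one first proves that the sublevel set $L_{\gamma_0}$ is contractible for a suitable $\gamma_0$, and this is precisely where the sign-changing hypothesis breaks the Chen--Xu argument: their contraction is built from conformal dilations $\varphi_s$, and for sign-changing $f$ one can neither guarantee $\dashint_{S^n}f\circ\varphi_s\,d\mu_{S^n}>0$ nor control the energy level along that path. The paper replaces it with a two-stage homotopy --- run the flow up to a large time $T$, then interpolate algebraically between the rescaled flow solution and the constant function $1$ --- and verifies by hand that volume normalization, positivity of $\dashint fu_s^{2^*}$, and the energy bound $E_f[u_s]\leqslant\gamma_0$ all survive the interpolation. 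Your ``pseudo-gradient field on the compactified disk'' paragraph presupposes exactly this kind of deformation without constructing it. A further genuinely new ingredient you do not address is how to obtain the uniform two-sided bound on the normalized conformal factor $v_k$: rather than Chen--Xu's eigenvalue estimate, the paper applies the Schwetlick--Struwe alternative (Theorem \ref{cc}) a second time to the normalized metrics $h_k$ and shows, using condition (ii), that a concentration for $h_k$ would force the center of mass $\dashint_{S^n}x\,d\mu_{h_k}$ away from zero, contradicting the normalization. These are the steps that actually carry the theorem; as written, your proposal reproduces the scaffolding of the known positive-$f$ argument but leaves the sign-changing difficulties unresolved.
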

As explained by Machiodi \& Struwe in \cite{ms}, the index counting condition \eqref{indexcounting} below is, indeed, a special case of the Morse index condition \eqref{Morseindex}. Hence, we have the following corollary 
\begin{corollary}\label{corollary}
  Suppose $f(x)$ is a smooth Morse function on $S^n$ satisfying the following conditions:
 \begin{itemize}
   \item[(i)]
   $
   \begin{cases}
    f(x)\geqslant0~~\mbox{on}~~S^n~~\mbox{and}~~\int_{S^n}f~d\mu_{S^n}>0,&\mbox{if}~~n=3,\\[0.3em]
    \int_{S^n}f~d\mu_{S^n}>0,&\mbox{if}~~n\geqslant4;
   \end{cases}
   $
   \item[(ii)]
    $(\max_{S^n}|f|)\big/\big(\dashint_{S^n}f~d\mu_{S^n}\big)<2^\frac{2}{n}$;
   \item[(iii)] $|\nabla f|_{g_{S^n}}^2+(\Delta_{S^n}f)^2\neq0$;
   \item[(iv)] 
   \begin{equation}\label{indexcounting}
   \sum_{\{x\in S^n: f(x)>0, \nabla_{S^n}f(x)=0~~\mbox{and}~~\Delta_{S^n}f(x)<0\}}(-1)^{ind_f(x)}\neq(-1)^n,
   \end{equation}
 \end{itemize}
then $f$ can be realized as the scalar curvature of some metric $g$ in the conformal class of $g_{S^n}$, i.e., Eq.\eqref{sce} possesses a positive solution.
\end{corollary}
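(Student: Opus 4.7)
The plan is to reduce the corollary to Theorem~\ref{main} by showing that the index-counting hypothesis \eqref{indexcounting} already rules out every solution of the algebraic system \eqref{Morseindex}, trivial or not. Since conditions (i)--(iii) of the corollary are identical to those of Theorem~\ref{main}, the entire task is to deduce (iv) of Theorem~\ref{main} from (iv) of the corollary.

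First I would rewrite the left-hand side of \eqref{indexcounting} in terms of the integers $m_i$ defined in \eqref{Morseindex1}. Grouping the relevant critical points by Morse index $n-i$ yields
$$\sum_{x}(-1)^{\mbox{ind}_f(x)} \;=\; \sum_{i=0}^{n}(-1)^{n-i} m_i \;=\; (-1)^n\sum_{i=0}^{n}(-1)^i m_i,$$
so hypothesis (iv) of the corollary is equivalent to the arithmetic statement $\sum_{i=0}^n(-1)^i m_i \neq 1$. Next I would take any hypothetical $(k_0,\ldots,k_n)$ satisfying \eqref{Morseindex}, substitute $m_0=1+k_0$ and $m_i=k_{i-1}+k_i$ into the alternating sum, and observe that the $k_j$-contributions telescope: for each $0\leq j\leq n-1$ the terms $(-1)^j k_j$ (coming from $m_j$) and $(-1)^{j+1} k_j$ (coming from $m_{j+1}$) cancel, leaving only the $1$ from $m_0$ and the boundary term $(-1)^n k_n$, which vanishes because the last equation in \eqref{Morseindex} reads $k_n=0$. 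Hence every solution of \eqref{Morseindex} forces $\sum_i(-1)^i m_i = 1$.

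Contrapositively, hypothesis (iv) of the corollary excludes all solutions of \eqref{Morseindex}, in particular the non-trivial ones, and Theorem~\ref{main} applies to produce the desired positive solution of \eqref{sce}. The argument is purely combinatorial, amounting to a telescoping identity followed by invocation of Theorem~\ref{main}; I foresee no genuine obstacle beyond the bookkeeping of the alternating sum, and no new analytic input is required at this stage.
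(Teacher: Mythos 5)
Your reduction is correct and matches the paper's own argument: the paper deduces the corollary by setting $s=-1$ in the Morse polynomial identity \eqref{systemequation}, $\sum_i s^i m_i = 1+(1+s)\sum_i s^i k_i$, which collapses to exactly your telescoping identity $\sum_i(-1)^i m_i=1$ for any solution of \eqref{Morseindex}. So condition \eqref{indexcounting} indeed forces \eqref{Morseindex} to have no solutions at all, and Theorem \ref{main} applies.
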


For the case that the prescribed function $f$ possesses some kind of symmetry, inspired by the work \cite{bfr}, it seems that we could consider a little more general situation than that in Theorem \ref{lz}. Now, let us describe the idea by setting up some notations first. Let $G$ be a subgroup of isometry group of $S^n$. We say a function $f$ is $G-$invariant if
$$f(\theta(x))=f(x),\quad\mbox{for}~~\forall\theta\in G~~\mbox{and}~~\forall x\in S^n.$$
In addition, we define $\Sigma$ to be the fixed point set under the group $G$ as follow
$$\Sigma=\{x\in S^n: \theta(x)=x,\quad\mbox{for all}~~\theta\in G.\}.$$
Now, our second result reads as
\begin{theorem}\label{main1}
  Let $G$ be a subgroup of isometry group of $S^n$. Assume that $f$ is a $G-$invariant function satisfying
  \begin{itemize}
   \item[(i)]
   $
   \begin{cases}
    f(x)\geqslant0~~\mbox{on}~~S^n~~\mbox{and}~~\int_{S^n}f~d\mu_{S^n}>0,&\mbox{if}~~n=3,\\[0.3em]
    \int_{S^n}f~d\mu_{S^n}>0,&\mbox{if}~~n\geqslant4;
   \end{cases}
   $
   \item[(ii)]
    $(\max_{S^n}|f|)\big/\big(\dashint_{S^n}f~d\mu_{S^n}\big)<2^\frac{2}{n}$;
 \end{itemize}
 If there holds either
 \begin{itemize}
 \item[(a)] $\Sigma=\emptyset$, or\\ \vspace{-0.7em}
 \item[(b)] $\Sigma\neq\emptyset$ and one of two alternatives holds: $1^\circ$. $\max_{\Sigma}f\leqslant\dashint_{S^n}f~d\mu_{S^n}$; $2^\circ$. there exists a point $x_*\in \Sigma$ with $f(x_*)=\max_\Sigma f$ such that $\Delta_{S^n}f(x_*)>0$,
 \end{itemize}
 then $f$ can be realized as the scalar curvature of some metric $g$ in the conformal class of $g_{S^n}$, i.e., Eq.\eqref{sce} possesses a positive $G$-invariant solution.
\end{theorem}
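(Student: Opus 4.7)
The plan is to run the volume-normalized scalar curvature flow \eqref{flow} inside the $G$-invariant conformal class and to combine the analytical framework of Theorem \ref{main} with a symmetry argument that locates, and then rules out, the possible blow-up point. Starting from the round metric $g_{S^n}$ with initial conformal factor $u_0 \equiv 1$, the $G$-invariance of $f$ and the equivariance of \eqref{flow} under every $\theta \in G$, together with uniqueness for the associated parabolic equation, force the evolving factor $u(\cdot,t)$ to be $G$-invariant for all $t \geq 0$. Global existence, positivity, the volume normalization, and the uniform bounds on the Lagrange multiplier $\alpha(t)$ under conditions (i)--(ii) are identical to those already established for Theorem \ref{main}, with the sign-changing character of $f$ handled as there.

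This yields the usual dichotomy: either $u(\cdot,t)$ subconverges smoothly to a positive (hence automatically $G$-invariant) solution of \eqref{sce}, in which case the theorem is proved, or $u(\cdot,t)$ concentrates at a single point $x_\infty \in S^n$ after rescaling. In the latter case the blow-up analysis underlying Theorem \ref{main}, together with the strict simple bubble condition (ii), delivers
\[
\nabla_{S^n} f(x_\infty) = 0, \qquad \Delta_{S^n} f(x_\infty) \leq 0, \qquad f(x_\infty) > \bar f > 0,
\]
where $\bar f \definedas \dashint_{S^n} f \, d\mu_{S^n}$; in particular $x_\infty$ is a positive critical point of $f$ at which $f$ strictly exceeds its mean. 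The $G$-invariance of $u(\cdot,t)$ forces the unique concentration point to be $G$-fixed, so $x_\infty \in \Sigma$. In case (a), $\Sigma = \emptyset$, so concentration is impossible and the flow must converge. In case (b), one refines the rescaling analysis inside the invariant class, paralleling the reduced-energy argument used by Leung-Zhou \cite{lz}, to conclude that $x_\infty$ realizes $\max_\Sigma f$. Under alternative $1^\circ$ this forces $\bar f < f(x_\infty) = \max_\Sigma f \leq \bar f$, which is absurd; under alternative $2^\circ$, $x_\infty$ is a maximizer of $f|_\Sigma$ with $\Delta_{S^n} f(x_\infty) \leq 0$, so $x_\infty \neq x_*$, and comparing the reduced bubble energies at $x_\infty$ and $x_*$ (equal $f$-values, but only $x_*$ has strictly positive Laplacian) shows that $x_*$ gives strictly lower energy, contradicting the fact that the flow settles at $x_\infty$.

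The main obstacle is precisely this refined blow-up analysis in the $G$-invariant class. One has to verify that the sign-changes of $f$ do not spawn secondary bubbles, so that the single-point concentration alternative is robust, and that the flow localizes $x_\infty$ at a maximizer of $f|_\Sigma$ with the strict inequality $f(x_\infty) > \bar f$. Both statements are quantitative refinements of the bubble machinery behind Theorem \ref{main}, restricted to the $G$-invariant subspace, and should follow by combining those estimates with the variational reduction of \cite{lz}; once they are in place, the symmetry trichotomy above closes the argument mechanically.
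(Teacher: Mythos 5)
Your overall strategy coincides with the paper's for case (a) and for alternative $1^\circ$ of case (b): run the flow in the $G$-invariant class, use condition (ii) to obtain a \emph{unique} concentration point $Q$, use the $G$-invariance of $u(\cdot,t)$ to force $Q\in\Sigma$ (otherwise mass would concentrate at both $Q$ and $\theta(Q)\neq Q$ for some $\theta\in G$, contradicting uniqueness), and, starting from $u_0\equiv1$, combine the strict decay of $E_f$ with $E_f[u(t)]\rightarrow n(n-1)\,f(Q)^{(2-n)/n}$ to get $f(Q)>\dashint_{S^n}f\,d\mu_{S^n}$, which is incompatible with $\max_\Sigma f\leqslant\dashint_{S^n}f\,d\mu_{S^n}$. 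Up to that point your proposal matches the paper's proof.

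The treatment of alternative $2^\circ$ has a genuine gap. You assert that the concentration point $x_\infty$ ``realizes $\max_\Sigma f$'' and then try to conclude by comparing reduced bubble energies at $x_\infty$ and $x_*$. Neither step works as stated. First, nothing in the blow-up analysis forces the flow started at $u_0\equiv1$ to concentrate at a maximizer of $f|_\Sigma$: the monotonicity of $E_f$ only yields $n(n-1)\,f(Q)^{(2-n)/n}\leqslant E_f[u_0]$, i.e.\ a lower bound on $f(Q)$, so $Q$ may be any critical point in $\Sigma$ with $f(Q)>\dashint_{S^n}f\,d\mu_{S^n}$ and $\Delta_{S^n}f(Q)\leqslant0$. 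Second, the existence of a lower-energy bubble at $x_*$ is not by itself a contradiction: the flow does not select the minimal-energy concentration profile, it merely cannot increase $E_f$, so a cheaper configuration elsewhere contradicts nothing unless the \emph{initial} energy already lies below the limit energy. The paper closes this case by changing the initial datum: using $\Delta_{S^n}f(x_*)>0$ and \cite[Lemma 3.3]{lz} one constructs a $G$-invariant $u_0\in X_f$ with $E_f[u_0]<n(n-1)(\max_\Sigma f)^{(2-n)/n}+\epsilon$; the blow-up point $Q$ of the flow from this $u_0$ satisfies $Q\in\Sigma$ and $\Delta_{S^n}f(Q)\leqslant0$, whence by the argument of \cite[\S 3c]{lz} one has $f(Q)<\max_\Sigma f-\alpha$ for some $\alpha>0$, and for $\epsilon$ small enough this forces $E_f[u_0]<n(n-1)\,f(Q)^{(2-n)/n}=\lim_{t\rightarrow+\infty}E_f[u(t)]$, contradicting the decay of $E_f$. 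Your proposal is missing precisely this choice of initial datum (together with the verification that it belongs to $X_f$, which is where condition (ii) and the threshold $\gamma$ re-enter); without it the argument for $2^\circ$ does not close.
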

\vspace{0.3em}

\begin{remark}~~
\begin{itemize}
  \item[1)] If $f>0$ on $S^n$, the condition (ii) in Theorem \ref{main} becomes ${\max_{S^n}f}\big/({\dashint_{S^n}f~d\mu_{S^n}})$\\$<2^{2/n}$. Comparing to the assumption \eqref{sbc} in Theorem \ref{cx}, ours is weaker in the sense that even though ${\max_{S^n}f}\big/({\dashint_{S^n}f~d\mu_{S^n}})<2^{2/n}$, the quotient $\max_{S^n}f/(\min_{S^n}f)$ may be large.

  \item[2)] In view of the assumption that $\max_{S^n}f/\max_{\Sigma}f<2^{2/(n-2)}$ in Theorem \ref{lz}, our assumption (ii) in Theorem \ref{main1} does not need to involve any information of the fixed points set $\Sigma$. The reason is as follows. By combining the conditions $1^\circ$ and $2^\circ$, it is not hard to see that the assumption $\Delta_{S^n}f(x_*)>0$ will only be used in the case of $\max_{\Sigma}f>\dashint_{S^n}f~d\mu_{S^n}$ (as if $\max_{\Sigma}f\leqslant\dashint_{S^n}f~d\mu_{S^n}$, then the conclusion of the theorem holds). However, when $\max_{\Sigma}f>\dashint_{S^n}f~d\mu_{S^n}$, we will have ${\max_{S^n}f}/\max_{\Sigma}f<{\max_{S^n}f}\big/({\dashint_{S^n}f~d\mu_{S^n}})$. So, once ${\max_{S^n}f}\big/({\dashint_{S^n}f~d\mu_{S^n}})<2^{2/n}$, the condition \eqref{sbc1} in Theorem \ref{lz} will be satisfied automatically.

  \item[3)] In the case of $f$ changing sign, we require the dimension of $S^n$ to be at least $4$. This is because when $n=3$, we cannot get the lower bound of the quantity $\lambda^\prime(t)$, which is crucial for the global existence of the flow \eqref{eeforu} below.

  \item[4)] X. Xu and the author \cite{xz} generalized the result in Theorem \ref{cx} to the case of prescribing mean curvature on the unit ball. It is natural to believe those results in Theorems \ref{main} and \ref{main1} should also hold in the case of prescribed mean curvature on the unit ball. This is our forthcoming paper \cite{hz}

  \end{itemize}
\end{remark}

The paper is organized as follows: In \S2, we derive some evolution equations and elementary estimates; In \S3, we mainly focus on the global existence of our flow; In \S4, we try to perform the so-called blow-up analysis and describe the asymptotic behavior of the flow in the case of divergence; In the final section \S5, we devote ourselves to proof of the main theorems in the paper.

\section{the flow equation and some elementary estimates}
\subsection{The flow equation}
As in Chen \& Xu \cite{cx}, we consider a family of time-dependent metrics conformal to $g_{S^n}$ whose evolution equation satisfies
\begin{equation}\label{eeforg}
\begin{cases}
  \frac{\partial}{\partial t}g(t)=-(R-\lambda(t)f)g(t),\\[0.3em]
  g(0)=g_0,
\end{cases}
\end{equation}
where $R$ is the scalar curvature of $g(t)$, $g_0$ is the inital metric in the confomal class of $g_{S^n}$ and $\lambda(t)$ is given by \eqref{formulaforlambda} below.

The metric flow \eqref{eeforg} preserves the conformal class. So, if we write $g(t)=u(t)^{4/(n-2)}g_{S^n}$ and $g_0=u_0^{4/(n-2)}g_{S^n}$, then we have the evolution equation for the conformal factor $u(t)$
\begin{equation}
  \label{eeforu}
\begin{cases}
  \frac{\partial}{\partial t}u(t)=-\frac{n-2}{4}(R-\lambda(t)f)u(t),\\[0.3em]
  u(0)=u_0,
\end{cases}
\end{equation}
where $R$ can be written, in terms of $u$, as
\begin{equation}
  \label{formulaforR}
  R=u^{2^*-1}(-c_n\Delta_{g_{S^n}}u+n(n-1)u).
\end{equation}

{\bf Convention}: From now on, the average sign `$\dashint_{S^n}$' means that
$$\dashint_{S^n}=\frac{1}{\omega_n}\int_{S^n},$$
where $\omega_n$ is the volume of $S^n$ w.r.t. the standard metric $g_{S^n}$.

Recall that \eqref{sce} has a variational structure
\begin{equation}
  \label{energyfunctional}
  E_f[u]=\frac{E[u]}{(\dashint_{S^n}fu^{2^*}~d\mu_{S^n})^{2/2^*}},
\end{equation}
where
$$E[u]=\dashint_{S^n}c_n|\nabla u|_{g_{S^n}}^2~d\mu_{S^n}+n(n-1)\dashint_{S^n}u^2~d\mu_{S^n}$$
Observe that if $u\in C^2$, then a simple integration by parts and \eqref{formulaforR} shows that
\begin{eqnarray}
  \label{altexpforE[u]}
  E[u]&=&\dashint_{S^n}[-c_n\Delta_{g_{S^n}}u+n(n-1)u]u~d\mu_{S^n}\nonumber\\
  &=&\dashint_{S^n}R~d\mu_g.
\end{eqnarray}
Hence, $E[u]$ is nothing but the average of total scalar curvature of metric $g$. Now, to achieve our goal, we find it is convenient to fix the volume of metric $g$ along the flow. That is
\begin{equation*}
  0=\frac{d}{dt}vol(g(t))=\frac{d}{dt}\int_{S^n}u^{2^*}~d\mu_{g_{S^n}}=\frac{n}{2}\int_{S^n}(\lambda(t)f-R)~d\mu_g,
\end{equation*}
which implies that
\begin{equation}
  \label{formulaforlambda}
  \lambda(t)=\frac{\int_{S^n}R~d\mu_g}{\int_{S^n}f~d\mu_g}=\frac{E[u]}{\dashint_{S^n}f~d\mu_g}.
\end{equation}
To end this section, we collect some useful formulae without giving the detailed calculation, since they have appeared in \cite{cx}.
\begin{lemma}\label{eeforE_f[u]&R}
~~\\
{\upshape(i)} Let $u$ be a smooth solution of \eqref{eeforu}. Then one has
$$\frac{d}{dt}E_f[u]=-\frac{n-2}{2}\bigg(\dashint_{S^n}f~d\mu_g\bigg)^{-\frac{2}{2^*}}\dashint_{S^n}(R-\lambda(t)f)^2~d\mu_g,$$
In particular, the energy functional $E_f[u]$ is decay along the flow.\\
{\upshape(ii)} The scalar curvature satisfies the evolution equation
\begin{equation}\label{eeforR}
\frac{\partial}{\partial t}(\lambda(t)f-R)=(n-1)\Delta_g(\lambda(t)f-R)+(\lambda(t)f-R)R+\lambda^\prime(t)f.
\end{equation}
\end{lemma}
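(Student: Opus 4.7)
For part (i), the plan is to treat $E_f[u] = E[u]/A^{2/2^*}$ as a quotient, where $A \definedas \dashint_{S^n} f u^{2^*}\,d\mu_{S^n} = \dashint_{S^n} f\,d\mu_g$. The first variation of $E[\cdot]$ in the direction $v$ equals $2\dashint_{S^n}(-c_n\Delta_{g_{S^n}}u + n(n-1)u)v\,d\mu_{S^n}$, which by \eqref{formulaforR} rewrites as $2\dashint_{S^n} Ru^{2^*-1} v\,d\mu_{S^n}$. Substituting $v=\partial_t u=-\frac{n-2}{4}(R-\lambda f)u$ gives $\frac{d}{dt}E[u] = -\frac{n-2}{2}\dashint_{S^n} R(R-\lambda f)\,d\mu_g$, and a parallel direct computation (using $2^*\cdot\frac{n-2}{4}=\frac{n}{2}$) gives $A'(t) = -\frac{n}{2}\dashint_{S^n} f(R-\lambda f)\,d\mu_g$. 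Applying the quotient rule, noting $\frac{2}{2^*}=\frac{n-2}{n}$, and using the defining identity $E[u]=\lambda(t)A$ from \eqref{formulaforlambda} inside the denominator term, the numerator collapses by the algebraic identity $R(R-\lambda f)-\lambda f(R-\lambda f)=(R-\lambda f)^2$ to produce the stated formula; monotone decay of $E_f$ is then immediate.

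For part (ii), the plan is to differentiate \eqref{formulaforR} in $t$ and convert the resulting background Laplacian $\Delta_{g_{S^n}}$ back to the intrinsic $\Delta_g$. Writing $F \definedas R-\lambda(t)f$ and substituting $\partial_t u = -\frac{n-2}{4}Fu$ on both sides of $Ru^{2^*-1}=-c_n\Delta_{g_{S^n}}u+n(n-1)u$, then dividing by $u^{2^*-1}$ and using $c_n(n-2)/4=n-1$, one arrives at
\begin{equation*}
\partial_t R = \frac{n+2}{4}\,RF + u^{-(2^*-1)}\left[(n-1)\Delta_{g_{S^n}}(Fu) - \frac{n(n-1)(n-2)}{4}Fu\right].
\end{equation*}
The bracketed quantity is then reorganised by the conformal-Laplacian transformation law $-c_n\Delta_{g_{S^n}}(u\phi)+n(n-1)u\phi = u^{2^*-1}(-c_n\Delta_g\phi+R\phi)$ applied with $\phi=F$: after rearrangement it becomes $(n-1)u^{2^*-1}\Delta_g F-\frac{n-2}{4}R\,u^{2^*-1}F$. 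Plugging back in one obtains $\partial_t R = RF + (n-1)\Delta_g F$, and subtracting this identity from the trivial $\partial_t(\lambda(t)f) = \lambda'(t)f$ delivers \eqref{eeforR}.

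The only real obstacle is the bookkeeping of the $\frac{n\pm2}{4}$, $\frac{n(n-1)(n-2)}{4}$, and $c_n$ prefactors: one must track them carefully so that the $+\frac{n+2}{4}RF$ term coming from the $t$-derivative of $Ru^{2^*-1}$ and the $-\frac{n-2}{4}RF$ term produced by the conformal-Laplacian identity combine cleanly to leave exactly $RF$, while the $\frac{n(n-1)(n-2)}{4}Fu$ contribution cancels against its counterpart. Both parts of the lemma are in the end algebraic consequences of the flow equation \eqref{eeforu} together with a single conformal-change identity, with no analytic input required, which is consistent with the paper's decision to omit the computation.
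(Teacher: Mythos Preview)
Your proof is correct in both parts: the quotient-rule computation for $\tfrac{d}{dt}E_f[u]$ and the conformal-Laplacian identity $L_{g_{S^n}}(u\phi)=u^{2^*-1}L_g\phi$ for $\partial_t R$ are exactly the standard route, and your coefficient bookkeeping checks out. The paper itself omits the calculation entirely and simply refers to \cite{cx}, so your write-up is in effect supplying the details that the paper chose to suppress.
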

\section{Global existence of the flow}
In this section, we focus on the global existence of the flow \eqref{eeforu}. The key point for this purpose is that our flow will preserve the positive property of the quantity $\int_{S^n}fu^{2^*}~d\mu_{S^n}$ when we initially have $\int_{S^n}fu_0^{2^*}~d\mu_{S^n}>0$.
\begin{lemma}
  \label{pospro}
  Assume that $\int_{S^n}fu_0^{2^*}~d\mu_{S^n}>0$ and $u$ is a smooth solution of the flow \eqref{eeforu} on $[0,T)$ for some $T>0$. Then one has
  $$\int_{S^n}fu(t)^{2^*}~d\mu_{S^n}>0,$$
  for all $t\in[0,T)$.
\end{lemma}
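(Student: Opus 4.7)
The plan is a short continuity argument. Set $A(t) \definedas \int_{S^n} f\, u(t)^{2^*}\, d\mu_{S^n}$; by hypothesis $A(0) > 0$, and the goal is to show $A(t) > 0$ on all of $[0,T)$. The strategy is to rule out the possibility that $A(t)$ ever touches $0$ by showing that this would force the energy $E_f[u(t)]$ to blow up, contradicting its monotonicity along the flow (Lemma \ref{eeforE_f[u]&R}(i)).

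Concretely, I would let $t_*$ be the supremum of times in $[0,T)$ for which $A$ remains strictly positive on $[0,t]$, and argue by contradiction that $t_* = T$. Assuming $t_* < T$, continuity of $A$ forces $A(t_*) = 0$ and $A(t) \to 0^+$ as $t \to t_*^-$. On the open interval $[0, t_*)$, where $A(t) > 0$, two ingredients become available. The first is the upper bound $E_f[u(t)] \leq E_f[u_0]$, immediate from Lemma \ref{eeforE_f[u]&R}(i) (the formula for $\tfrac{d}{dt} E_f[u]$ has a definite sign precisely when $\dashint_{S^n} f\, d\mu_g > 0$). The second is a uniform positive lower bound on $E[u(t)]$: because $\lambda(t)$ is chosen in \eqref{formulaforlambda} so that the flow preserves volume, $V_0 \definedas \int_{S^n} u(t)^{2^*}\, d\mu_{S^n}$ is constant in $t$, and the Yamabe-Sobolev inequality on $(S^n, g_{S^n})$ delivers
$$c_n \int_{S^n} |\nabla u|_{g_{S^n}}^2\, d\mu_{S^n} + n(n-1)\int_{S^n} u^2\, d\mu_{S^n} \geq Y(S^n)\, V_0^{2/2^*},$$
i.e.\ $E[u(t)] \geq Y(S^n)\, V_0^{2/2^*}/\omega_n > 0$, independently of $t$.

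The contradiction is then immediate: the identity $E_f[u(t)] = \omega_n^{2/2^*}\, E[u(t)]/A(t)^{2/2^*}$, combined with the positive lower bound on $E[u(t)]$ and $A(t) \to 0^+$, forces $E_f[u(t)] \to +\infty$ as $t \to t_*^-$, in conflict with the uniform upper bound $E_f[u(t)] \leq E_f[u_0]$. Hence $t_* = T$, and the lemma follows. The only non-routine ingredient is the uniform lower bound on $E[u]$ coming from volume preservation and the positivity of $Y(S^n)$; this is standard on the sphere and, crucially, is blind to the sign of $f$, so the sign-changing setting introduces no new difficulty here. I do not anticipate any serious obstacle in executing this plan.
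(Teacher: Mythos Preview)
Your proof is correct and uses precisely the same two ingredients as the paper: the sharp Sobolev lower bound $E[u]\geqslant n(n-1)(\dashint_{S^n}u_0^{2^*}\,d\mu_{S^n})^{2/2^*}$ via volume preservation, combined with the monotonicity $E_f[u(t)]\leqslant E_f[u_0]$ from Lemma~\ref{eeforE_f[u]&R}(i). The only difference is packaging---you run a continuity/contradiction argument, while the paper rearranges the same inequalities directly into the explicit positive lower bound $\dashint_{S^n} f u(t)^{2^*}\,d\mu_{S^n}\geqslant (n(n-1)/E_f[u_0])^{2^*/2}\dashint_{S^n}u_0^{2^*}\,d\mu_{S^n}$, which is then reused in later lemmas.
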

\begin{proof}
Since $\int_{S^n}fu_0^{2^*}~d\mu_{S^n}>0$, it follows from the definition of $E_f[u]$ that $E_f[u_0]>0$. By the Sharp Sobolev inequality and the volume-preserving property of our flow, we can obtain
\begin{equation}\label{lowerboundofE[u]}
 E[u]\geqslant n(n-1)\bigg(\dashint_{S^n}u_0^{2^*}~d\mu_{S^n}\bigg)^{2/2^*}.
 \end{equation}
 On the other hand, we have, by Lemma \ref{eeforE_f[u]&R} (i), that
 $$\frac{E[u]}{(\dashint_{S^n}fu(t)^{2^*}~d\mu_{S^n})^{2/2^*}}\leqslant E_f[u_0].$$
 Combining the two inequalities above yields
 \begin{equation}\label{poslowerbound}
\dashint_{S^n}fu(t)^{2^*}~d\mu_{S^n}\geqslant\bigg(\frac{n(n-1)}{E_f[u_0]}\bigg)^{2^*/2}\dashint_{S^n}u_0^{2^*}~d\mu_{S^n}>0.
\end{equation}
\end{proof}
With help of the Lemma above, we are able to show the boundedness of $\lambda(t)$.
\begin{lemma}
  \label{bdoflambda}
  During the evolution of the flow \eqref{eeforu}, $\lambda(t)$ remains bounded. More precisely, we have
  $$\lambda_1\leqslant \lambda(t)\leqslant\lambda_2,$$
  where
  $$\lambda_1=\frac{n(n-1)}{\max_{S^n}f}\bigg(\dashint_{S^n}u_0^{2^*}~d\mu_{S^n}\bigg)^{-\frac2n}\quad\mbox{and}\quad
  \lambda_2=\bigg(\frac{E_f[u_0]}{(n(n-1))^{2/n}}\bigg)^{\frac{n}{(n-2)}}\bigg(\dashint_{S^n}u_0^{2^*}~d\mu_{S^n}\bigg)^{-\frac2n}.$$
\end{lemma}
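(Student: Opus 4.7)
The plan is to rewrite $\lambda(t)$ in a way that exposes the quantities already controlled by the preceding lemmas. Since $d\mu_g = u^{2^*}\,d\mu_{S^n}$, the defining identity \eqref{formulaforlambda} reads $\lambda(t)=E[u]/\dashint_{S^n}fu^{2^*}\,d\mu_{S^n}$. Using the definition \eqref{energyfunctional} of $E_f$ together with $1-2/2^*=2/n$, I will reorganize this as
$$\lambda(t)=\frac{E_f[u(t)]}{\left(\dashint_{S^n}fu^{2^*}\,d\mu_{S^n}\right)^{2/n}}.$$
From this form the upper bound is immediate: Lemma \ref{eeforE_f[u]&R}(i) gives $E_f[u(t)]\le E_f[u_0]$, and Lemma \ref{pospro} (specifically \eqref{poslowerbound}) bounds the denominator from below. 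Substituting both and simplifying the exponents — so that $(2^*/2)(2/n)=2/(n-2)$ and $1+2/(n-2)=n/(n-2)$ — reproduces exactly the quantity $\lambda_2$.

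For the lower bound I return to the original form $\lambda(t)=E[u]/\dashint_{S^n}fu^{2^*}\,d\mu_{S^n}$ and estimate numerator and denominator separately. The denominator is controlled from above by the pointwise inequality $f\le\max_{S^n}f$ together with the volume-preserving property of the flow, giving $\dashint_{S^n}fu^{2^*}\,d\mu_{S^n}\le(\max_{S^n}f)\,\dashint_{S^n}u_0^{2^*}\,d\mu_{S^n}$. The numerator is bounded below by the sharp Sobolev inequality on $S^n$ (combined again with volume preservation), which is precisely \eqref{lowerboundofE[u]}. Taking the quotient and applying $2/2^*-1=-2/n$ yields exactly $\lambda_1$.

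The argument contains no real obstacle; it is bookkeeping of exponents on top of two facts already established. The one structural point worth emphasizing — and the reason this lemma is nontrivial in the sign-changing setting — is that the denominator $\dashint_{S^n}fu^{2^*}\,d\mu_{S^n}$ could \emph{a priori} vanish or even become negative along the flow. This is precisely what Lemma \ref{pospro} rules out, uniformly in $t$ and in terms of initial data only, and that is what ultimately makes $\lambda(t)$ both finite and bounded away from zero.
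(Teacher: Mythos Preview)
Your proof is correct and follows essentially the same approach as the paper: rewriting $\lambda(t)=E_f[u]\big(\dashint_{S^n}fu^{2^*}\,d\mu_{S^n}\big)^{-2/n}$ to get the upper bound from the monotonicity of $E_f$ together with \eqref{poslowerbound}, and returning to $\lambda(t)=E[u]/\dashint_{S^n}fu^{2^*}\,d\mu_{S^n}$ to get the lower bound from \eqref{lowerboundofE[u]} and the volume constraint. Your closing remark on why Lemma~\ref{pospro} is the crucial input in the sign-changing setting is apt and matches the paper's logical dependence.
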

\begin{proof}
  In view of \eqref{formulaforlambda} and \eqref{energyfunctional}, we can rewrite $\lambda(t)$ as
  $$\lambda(t)=E_f[u]\bigg(\dashint_{S^n}fu^{2^*}~d\mu_{S^n}\bigg)^{-2/n}.$$
  On one hand, using \eqref{poslowerbound} and the non-increasing property of $E_f[u]$  we can get
  \begin{eqnarray*}
  \lambda(t)&\leqslant& E_f[u_0]\bigg(\frac{n(n-1)}{E_f[u_0]}\bigg)^{-\frac{2}{(n-2)}}\bigg(\dashint_{S^n}u_0^{2^*}~d\mu_{S^n}\bigg)^{-\frac2n}\\
  &=&\bigg(\frac{E_f[u_0]}{(n(n-1))^{2/n}}\bigg)^{\frac{n}{(n-2)}}\bigg(\dashint_{S^n}u_0^{2^*}~d\mu_{S^n}\bigg)^{-\frac2n}.
  \end{eqnarray*}
  On the other hand, it follows from \eqref{lowerboundofE[u]} and the volume-preserving property of our flow that
  \begin{eqnarray*}
    \lambda(t)&=&\frac{E[u]}{\dashint_{S^n}fu^{2^*}~d\mu_{S^n}}
    \geqslant\frac{n(n-1)\big(\dashint_{S^n}u_0^{2^*}~d\mu_{S^n}\big)^{2/2^*}}{(\max_{S^n}f)\dashint_{S^n}u_0^{2^*}~d\mu_{S^n}}\\
    &=&\frac{n(n-1)}{\max_{S^n}f}\bigg(\dashint_{S^n}u_0^{2^*}~d\mu_{S^n}\bigg)^{-\frac2n}.
  \end{eqnarray*}
  \end{proof}
  Now, we set
  $$F_2(t)=\dashint_{S^n}(\lambda f-R)^2~d\mu_g.$$
  We then have that the derivative of the normalized factor $\lambda(t)$ is bounded by $F_2(t)$.
  \begin{lemma}
    Let $u$ be a smooth solution of flow \eqref{eeforu}. Then there holds
    \begin{equation}\label{lambdaprime}
    \lambda^\prime(t)=-\bigg(\dashint_{S^n}f~d\mu_g\bigg)^{-1}\bigg[\frac{n-2}{2}\dashint_{S^n}(\lambda f-R)^2~d\mu_g+\dashint_{S^n}\lambda f(\lambda f-R)~d\mu_g\bigg].
    \end{equation}
    In particular,
    \begin{equation}\label{lbdatbound}
    |\lambda^\prime(t)|\leqslant C\Big(F_2(t)+\sqrt{F_2(t)}\Big),
    \end{equation}
    where $C>0$ is a universal constant.
  \end{lemma}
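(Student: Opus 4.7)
The plan is to derive \eqref{lambdaprime} by differentiating the quotient expression \eqref{formulaforlambda} for $\lambda(t)$, and then to obtain \eqref{lbdatbound} by invoking the estimates already furnished by Lemmas \ref{pospro} and \ref{bdoflambda}.

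I first collect two infinitesimal ingredients. Differentiating $d\mu_g=u^{2^*}d\mu_{S^n}$ via \eqref{eeforu} yields $\partial_t d\mu_g=\tfrac{n}{2}(\lambda f-R)\,d\mu_g$, so the volume-preservation built into \eqref{formulaforlambda} gives the orthogonality identity $\int_{S^n}(\lambda f-R)\,d\mu_g=0$. In particular $\int_{S^n}R\,d\mu_g=\lambda\int_{S^n}f\,d\mu_g$, and differentiating this relation produces the useable form
\begin{equation*}
\lambda'(t)\int_{S^n}f\,d\mu_g=\frac{d}{dt}\int_{S^n}R\,d\mu_g-\lambda(t)\frac{d}{dt}\int_{S^n}f\,d\mu_g.
\end{equation*}
The denominator derivative is immediate from $\partial_t d\mu_g$: one obtains $\tfrac{d}{dt}\int_{S^n}f\,d\mu_g=\tfrac{n}{2}\int_{S^n}f(\lambda f-R)\,d\mu_g$. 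For the numerator I would rewrite \eqref{eeforR} as $\partial_tR=-(n-1)\Delta_g(\lambda f-R)-(\lambda f-R)R$ (the two $\lambda'(t)f$ pieces cancel, which is exactly what keeps the argument from being circular), integrate on the closed manifold so that the Laplacian contribution vanishes, and combine with $\partial_t d\mu_g$ to reach $\tfrac{d}{dt}\int_{S^n}R\,d\mu_g=\tfrac{n-2}{2}\int_{S^n}(\lambda f-R)R\,d\mu_g$.

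Substituting these two derivatives into the displayed identity and writing $R=\lambda f-(\lambda f-R)$ in the resulting integral splits it into a multiple of $\int(\lambda f-R)^2\,d\mu_g$ and of $\lambda\int f(\lambda f-R)\,d\mu_g$. The coefficient in front of the latter becomes $\tfrac{n-2}{2}-\tfrac{n}{2}=-1$, so after dividing through by $\int_{S^n}f\,d\mu_g$ exactly the right-hand side of \eqref{lambdaprime} pops out (the $\omega_n$ normalising factors cancel between numerator and denominator and turn the integrals into averages).

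For the bound \eqref{lbdatbound} I would use \eqref{poslowerbound} to bound $\dashint_{S^n}f\,d\mu_g$ from below by a positive constant depending only on the initial data, Lemma \ref{bdoflambda} to bound $|\lambda(t)|$, and the Cauchy--Schwarz inequality together with the preserved volume $\Vol(g(t))=\Vol(g_0)$ to estimate $\bigl|\dashint_{S^n}\lambda f(\lambda f-R)\,d\mu_g\bigr|\leqslant C\sqrt{F_2(t)}$; the first summand in \eqref{lambdaprime} is $\tfrac{n-2}{2}F_2(t)$ by definition. The only real subtlety in the whole computation is the careful bookkeeping of the factors $\tfrac{n}{2}$ and $\tfrac{n-2}{2}$ arising from $2^*=2n/(n-2)$, and the verification that the $\lambda'(t)f$ contributions in \eqref{eeforR} cancel when one passes to $\partial_tR$; granting these, the rest is algebra followed by Cauchy--Schwarz and the previously established uniform bounds.
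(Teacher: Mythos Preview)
Your proof is correct and essentially the same as the paper's: you differentiate the quotient form $\lambda=E[u]\big/\dashint_{S^n}f\,d\mu_g$ using the evolution equation \eqref{eeforR} for $R$, whereas the paper differentiates the equivalent form $\lambda=E_f[u]\cdot\bigl(\dashint_{S^n}f\,d\mu_g\bigr)^{-2/n}$ using Lemma~\ref{eeforE_f[u]&R}(i); both are straightforward computations yielding \eqref{lambdaprime}. For \eqref{lbdatbound} your use of \eqref{poslowerbound}, Lemma~\ref{bdoflambda}, and Cauchy--Schwarz matches the paper's H\"older-based argument exactly.
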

  \begin{proof}
    Since $\lambda(t)$ can be rewritten as
    $$\lambda(t)=E_f[u]\cdot\bigg(\dashint_{S^n}f~d\mu_g\bigg)^{-\frac2n},$$
    by using Lemma \ref{eeforE_f[u]&R}, \eqref{eeforu} and a direct computation, we can obtain \eqref{lambdaprime}. As for \eqref{lbdatbound}, it follows from the H\"older's inequality, Lemma \ref{bdoflambda} and \eqref{poslowerbound}.
  \end{proof}
  In order to get the uniform bound for $\lambda^\prime(t)$, we have to bound the quantity $F_2(t)$. However, we need to restrict the dimension of $S^n$ to be at least $4$.
  \begin{lemma}
    \label{boundofF_2}
    If $n\ge4$, then one can find a universal constant $C>0$ such that
    $$F_2(t)\leqslant C,$$
    for all $t\geqslant0$.
  \end{lemma}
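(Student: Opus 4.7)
The plan is to derive a differential inequality for $F_2(t)$ and combine it with the time-integrability $\int_0^\infty F_2(t)\,dt<\infty$ (inherited from the energy monotonicity of Lemma~\ref{eeforE_f[u]&R}(i)) to rule out concentration of $F_2$. The time-integrability itself follows by integrating Lemma~\ref{eeforE_f[u]&R}(i) from $0$ to $T$: the denominator $\dashint f\,d\mu_g$ is bounded above by $\max|f|\cdot\dashint u_0^{2^*}d\mu_{S^n}$ via volume preservation, while $E_f[u]\geq 0$ because $E[u]>0$ by the Sobolev inequality and $\dashint fu^{2^*}d\mu_{S^n}>0$ by Lemma~\ref{pospro}; hence $\int_0^\infty F_2\,dt\leq CE_f[u_0]$.

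For the differential inequality, set $\phi:=\lambda f-R$ and differentiate $F_2=\dashint\phi^2d\mu_g$ under the integral. Using \eqref{eeforR}, the formula $\partial_t d\mu_g=(n/2)\phi\,d\mu_g$, integration by parts in the Laplacian term, and the substitution $R=\lambda f-\phi$ in $\int\phi^2R\,d\mu_g$, a direct computation yields
$$\omega_n F_2'(t)=-2(n-1)\int|\nabla_g\phi|_g^2d\mu_g+2\lambda\int f\phi^2 d\mu_g+\frac{n-4}{2}\int\phi^3 d\mu_g+2\lambda'(t)\int f\phi\,d\mu_g.$$
I would then invoke the identity \eqref{lambdaprime} for $\lambda'$ to rewrite $\lambda\int f\phi\,d\mu_g$, turning the last summand into a nonpositive $(\lambda')^2$-piece plus a term proportional to $\lambda'(t)F_2(t)$ whose sign is indefinite.

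The gradient term is favorable, the quadratic term $2\lambda\int f\phi^2$ is bounded by $CF_2$ through Lemma~\ref{bdoflambda} and $\max|f|$, and the cubic term either drops out (when $n=4$) or is estimated by H\"older's inequality and the conformal Sobolev embedding on $(S^n,g(t))$, with the dangerous part absorbed into the gradient contribution (when $n\geq 5$). The principal obstacle is the $\lambda'(t)F_2(t)$ piece: when $\lambda'(t)<0$ it is harmful, and taming it requires a \emph{uniform lower bound} $\lambda'(t)\geq -C$. Deriving this lower bound from \eqref{lambdaprime}, via H\"older together with the Sobolev embedding and the a priori estimates of Lemmas~\ref{pospro} and~\ref{bdoflambda}, is exactly the step that forces the dimensional restriction $n\geq 4$: in dimension three the critical exponent $2^*=6$ is too large for the required interpolation to close, which is the failure flagged in Remark~3.

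Once the lower bound on $\lambda'$ is in hand, the estimates above combine into a differential inequality of the type $F_2'(t)\leq C_1(1+F_2(t))$. Together with $\int_0^\infty F_2\,dt<\infty$, this yields the claim: if $F_2(t_k)\to\infty$ along some sequence of times, the differential inequality forces $F_2$ to remain comparable to $F_2(t_k)$ on a backward time interval of fixed length, which contributes an arbitrarily large amount to $\int F_2\,dt$ and contradicts its finiteness. Hence $F_2(t)\leq C$ uniformly for all $t\geq 0$.
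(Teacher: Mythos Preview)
Your overall strategy (derive a differential inequality for $F_2$ and combine with $\int_0^\infty F_2\,dt<\infty$) is the right one, and your derivation of the evolution identity
\[
F_2'(t)=-2(n-1)\dashint|\nabla_g\phi|^2\,d\mu_g+2\lambda\dashint f\phi^2\,d\mu_g+\tfrac{n-4}{2}\dashint\phi^3\,d\mu_g+2\lambda'\dashint f\phi\,d\mu_g
\]
is correct. However, two steps in your plan do not close.

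\textbf{The cubic term.} Your proposal to bound $\int\phi^3\,d\mu_g$ by H\"older plus the conformal Sobolev embedding and absorb it into the gradient fails for two reasons. First, on the time-varying metric $g(t)$ the only Sobolev inequality with a \emph{uniform} constant is the conformal one,
\[
\dashint c_n|\nabla\phi|_g^2\,d\mu_g+\dashint R\,\phi^2\,d\mu_g\ \geq\ n(n-1)\Big(\dashint|\phi|^{2^*}\,d\mu_g\Big)^{2/2^*},
\]
and you have eliminated the indispensable $\int R\phi^2$ term by substituting $R=\lambda f-\phi$. Second, for $n\geq 7$ one has $2^*<3$, so even a uniform Sobolev bound on $\|\phi\|_{2^*}$ would not control $\|\phi\|_3$. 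The paper avoids this by substituting the \emph{other} way: write $\phi^3=(\lambda f-R)\phi^2$ so that the cubic term becomes $\tfrac{n-4}{2}\lambda\int f\phi^2-\tfrac{n-4}{2}\int R\phi^2$. Then exactly the fraction $\tfrac{n-4}{n-2}$ of the gradient term pairs with the whole $R\phi^2$ piece to form $-\tfrac{n-4}{2}\big[\int c_n|\nabla\phi|^2+\int R\phi^2\big]$, which is $\leq 0$ for $n\geq 4$ by the sharp (conformal) Sobolev inequality. \emph{This} grouping is where the restriction $n\geq 4$ actually enters the proof of the lemma.

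\textbf{The $\lambda'$ term.} Your plan requires a uniform lower bound $\lambda'(t)\geq -C$, but in the paper's logic that bound (Corollary~\ref{boundoflambda_t} for $n\geq4$) is a \emph{consequence} of the present lemma, obtained via \eqref{lbdatbound} once $F_2\leq C$ is known. Invoking it here is circular. The non-circular route is simply to use \eqref{lbdatbound} directly: $|\lambda'|\leq C(F_2+\sqrt{F_2})$ together with $|\dashint f\phi\,d\mu_g|\leq C\sqrt{F_2}$ gives $|2\lambda'\dashint f\phi|\leq CF_2(1+\sqrt{F_2})$. Consequently one only obtains
\[
F_2'(t)\ \leq\ C\,F_2(t)\big(1+\sqrt{F_2(t)}\big),
\]
not $F_2'\leq C(1+F_2)$. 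The paper then sets $w(t)=\int_0^{F_2(t)}(1+\sqrt{s})^{-1}\,ds$, so that $w'\leq CF_2$; integrating and using $\int_0^\infty F_2\,dt\leq CE_f[u_0]$ bounds $w$, hence $F_2$, uniformly. Your backward-propagation argument would also work once the correct superlinear inequality is in hand, but the linear inequality you aim for is not available.
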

  \begin{proof}
    Using Lemma \ref{eeforR} and \eqref{eeforu}, we have
    \begin{eqnarray*}
      \frac{d}{dt}F_2(t)&=&\frac{d}{dt}\dashint_{S^n}(\lambda f-R)^2~d\mu_g\\
      &=&2\dashint_{S^n}(\lambda f-R)\Big[(n-1)\Delta_g(\lambda f-R)+(\lambda f-R)R+\lambda^\prime f\Big]~d\mu_g\\
      &&+\frac{n}{2}\dashint_{S^n}(\lambda f-R)^3d\mu_g\\
      &=&-2(n-1)\dashint_{S^n}|\nabla(\lambda f-R)|_g^2~d\mu_g+\frac{4-n}{2}\dashint_{S^n}R(\lambda f-R)^2~d\mu_g\\
      &&+2\lambda^\prime\dashint_{S^n}f(\lambda f-R)~d\mu_g+\frac n2\dashint_{S^n}\lambda f(\lambda f-R)^2~d\mu_g\\
      &=&-\frac{n-4}{2}\bigg[\dashint_{S^n}c_n|\nabla(\lambda f-R)|_g^2~d\mu_g+\dashint_{S^n}R(\lambda f-R)^2~d\mu_g\bigg]\\
      &&-\dashint_{S^n}c_n|\nabla(\lambda f-R)|_g^2~d\mu_g+2\lambda^\prime\dashint_{S^n}f(\lambda f-R)~d\mu_g\\
      &&+\frac n2\dashint_{S^n}\lambda f(\lambda f-R)^2~d\mu_g,
    \end{eqnarray*}
    which implies by sharp Sobolev inequality, \eqref{lbdatbound}, and H\"older's inequality that
    \begin{eqnarray*}
      \frac{d}{dt}F_2(t)&\leqslant&-\frac{n(n-1)(n-4)}{2}\bigg(\dashint_{S^n}|\lambda f-R|^{2^*}~d\mu_g\bigg)^\frac{2}{2^*}-\dashint_{S^n}c_n|\nabla(\lambda f-R)|_g^2~d\mu_g\\
      &&+2\lambda^\prime\dashint_{S^n}f(\lambda f-R)~d\mu_g+\frac n2\dashint_{S^n}\lambda f(\lambda f-R)^2~d\mu_g\\
      &\leqslant&CF_2(t)\Big(1+\sqrt{F_2(t)}\Big).
    \end{eqnarray*}
    Set $$w(t)=\int_0^{F_2(t)}\frac{1}{1+\sqrt{s}}~ds.$$
    Then,
    \begin{equation}\label{dw/dt}
    \frac{dw}{dt}\leqslant CF_2(t).
    \end{equation}
    From Lemma \ref{eeforE_f[u]&R}, it follows that
    $$F_2(t)\leqslant-C\frac{d}{dt}E_f[u].$$
    Hence, by the fact that $E_f[u]\geqslant0$, we get
    $$\int_0^tF_2(s)~ds\leqslant CE_f[u_0].$$
    Now, integrating \eqref{dw/dt} from $0$ to $t$ with $t>0$ yields
    \begin{eqnarray}\label{upperboundofw}
      w(t)\leqslant w(0)+C\int_0^tF_2(s)~ds\leqslant F_2(0)+CE_f[u_0].
    \end{eqnarray}
    It is easy to see by the definition of $w(t)$ that
    \begin{equation}
      \label{lowerboundofw}
      w(t)\geqslant\frac{F_2(t)}{1+\sqrt{F_2(t)}}\geqslant
      \begin{cases}
        \frac{F_2(t)}{2}, & F_2(t)\leqslant 1,\\
        \frac{\sqrt{F_2(t)}}{2}, & F_2(t)>1.
      \end{cases}
    \end{equation}
    Combining \eqref{upperboundofw} and \eqref{lowerboundofw} yields the conclusion.
  \end{proof}
  Now, we immediately have the corollary
  \begin{corollary}
    \label{boundoflambda_t}
    There exists a universal constant $\Lambda_0>0$ such that
    $$
    \left\{
    \begin{array}{ll}
    \lambda^\prime(t)\leqslant\Lambda_0,&\mbox{if}~~n=3,\\[0.5em]
    |\lambda^\prime(t)|\leqslant\Lambda_0,&\mbox{if}~~n\geqslant4,
    \end{array}\right.
    $$
    for all $t\geqslant0$.
  \end{corollary}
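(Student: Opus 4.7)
The plan is to establish the corollary by combining the identity \eqref{lambdaprime} with the a priori estimates already in place, the argument splitting according to whether $F_2(t)$ is itself a priori bounded.

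For $n \geqslant 4$, the argument is immediate: the estimate $|\lambda^\prime(t)| \leqslant C(F_2(t) + \sqrt{F_2(t)})$ from \eqref{lbdatbound}, combined with the uniform bound $F_2(t) \leqslant C$ supplied by Lemma \ref{boundofF_2}, at once yields $|\lambda^\prime(t)| \leqslant \Lambda_0$.

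For $n = 3$, no upper bound on $F_2(t)$ is available, so I would instead work directly from \eqref{lambdaprime}. By Lemma \ref{pospro} together with \eqref{poslowerbound}, the denominator $\dashint_{S^n} f \, d\mu_g$ is bounded below by a strictly positive constant. Thus an upper bound on $\lambda^\prime(t)$ reduces to a lower bound on the bracket $\frac{n-2}{2} F_2(t) + \dashint_{S^n} \lambda f (\lambda f - R) \, d\mu_g$. I would apply the pointwise inequality $ab \geqslant -\frac{1}{2}(a^2 + b^2)$ with $a = \lambda f$ and $b = \lambda f - R$ and integrate to obtain
$$\dashint_{S^n} \lambda f(\lambda f - R) \, d\mu_g \geqslant -\frac{1}{2} \dashint_{S^n} \lambda^2 f^2 \, d\mu_g - \frac{1}{2} F_2(t).$$
When $n = 3$ the coefficient $\frac{n-2}{2} = \frac{1}{2}$ exactly cancels the $\frac{1}{2} F_2(t)$ term, leaving only $-\frac{1}{2}\dashint_{S^n} \lambda^2 f^2 \, d\mu_g$; this quantity is uniformly bounded by Lemma \ref{bdoflambda}, the pointwise bound $|f| \leqslant \max_{S^n}|f|$, and the volume-preserving property of the flow. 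Dividing by the uniform lower bound on $\dashint_{S^n} f \, d\mu_g$ then yields $\lambda^\prime(t) \leqslant \Lambda_0$.

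The main obstacle is the $n = 3$ case, and the Young-type step above is sharp in the sense that it only delivers an upper bound on $\lambda^\prime$: a matching lower bound would require a uniform upper bound on $F_2(t)$, which in turn forces $n \geqslant 4$ through the dissipation identity computed in Lemma \ref{boundofF_2} (the sign of the $\frac{4-n}{2} R(\lambda f - R)^2$ term). This is precisely the dimensional dichotomy flagged in Remark~3) and explains why the corollary asserts a two-sided bound only for $n \geqslant 4$.
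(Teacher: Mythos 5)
Your proposal is correct and follows essentially the same route as the paper: for $n\geqslant4$ it combines \eqref{lbdatbound} with Lemma \ref{boundofF_2}, and for $n=3$ it bounds $\lambda'(t)$ from above by applying Young's inequality to the cross term in \eqref{lambdaprime} so that the $F_2(t)$ contribution is absorbed, then controls the remaining $\dashint_{S^n}\lambda^2f^2\,d\mu_g$ term via Lemma \ref{bdoflambda}, the volume-preserving property, and the positive lower bound \eqref{poslowerbound} on the denominator. The only cosmetic difference is the choice of Young's constant (the paper uses $\lambda f(\lambda f-R)\geqslant-\tfrac{n-2}{4}(\lambda f-R)^2-\tfrac{1}{n-2}(\lambda f)^2$, while you use the symmetric $\tfrac12$--$\tfrac12$ split), which leads to the same one-sided conclusion.
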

 \begin{proof}
   If $n=3$, we apply the Young's inequality to \eqref{lambdaprime} and then use Lemma \ref{bdoflambda}, \eqref{poslowerbound} and the volume-preserving property of the flow \eqref{eeforu} to obtain
   \begin{eqnarray*}
     \lambda^\prime(t)&\leqslant&\bigg(\dashint_{S^n}f~d\mu_g\bigg)^{-1}\bigg[-\frac{n-2}{4}F_2(t)+\frac{1}{n-2}\dashint_{S^n}(\lambda|f|)^2~d\mu_g\bigg]\\
     &\leqslant& \frac{(\lambda_2\max_{S^n}|f|)^2}{n-2}\bigg(\frac{E_f[u_0]}{n(n-1)}\bigg)^{2^*/2}.
   \end{eqnarray*}
   For $n\geqslant4$, it immediately follows from \eqref{lbdatbound} and Lemma \ref{boundofF_2} that there exists a universal constant $\bar{C}>0$ such that $|\lambda^\prime(t)|\leqslant\bar{C}$. Now, by setting $$\Lambda_0=\max\Big\{\frac{(\lambda_2\max_{S^n}|f|)^2}{n-2}\bigg(\frac{E_f[u_0]}{n(n-1)}\bigg)^{2^*/2},\bar{C}\Big\},$$
   we thus complete the proof.
 \end{proof}
   Up to here, we are ready to apply the standard maximum principle to obtain a uniform lower bound for the scalar curvature $R$. We set
   $$C_0:=\min\Big\{-\sqrt{2(\lambda_2\max_{S^n}|f|)^2+2\Lambda_0\max_{S^n}|f|},~~\min_{S^n}R(0)-\lambda_2\max_{S^n}|f|\Big\}.$$
  \begin{lemma}\label{uniformboundofR}
    The scalar curvature function $R$ of $g$ satisfies
    $$R-\lambda(t)f\geqslant C_0,$$
    for all $t\geqslant0$.
  \end{lemma}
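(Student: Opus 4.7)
The plan is to prove the stated lower bound on $R-\lambda(t)f$ by applying the parabolic maximum principle to the scalar quantity $v:=\lambda(t)f-R$, for which the evolution equation \eqref{eeforR} already supplies the reaction-diffusion PDE we need.

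Starting from \eqref{eeforR} and substituting $R=\lambda f-v$ into the reaction term $(\lambda f-R)R=v(\lambda f-v)$, I would rewrite
$$\frac{\partial v}{\partial t}=(n-1)\Delta_{g}v-v^{2}+(\lambda f)\,v+\lambda'(t)\,f.$$
Since $S^n$ is compact and $v(\cdot,t)$ is smooth, the spatial maximum $v_{\max}(t):=\max_{S^n}v(\cdot,t)$ is a Lipschitz function of $t$, and at any point $x_{0}(t)$ realising this maximum one has $\Delta_{g}v\leqslant 0$. Hamilton's maximum principle (in the sense of the upper Dini derivative) therefore gives
$$\frac{d}{dt}v_{\max}(t)\leqslant-v_{\max}(t)^{2}+(\lambda f)(x_{0})\,v_{\max}(t)+(\lambda'(t)f)(x_{0}).$$

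If $v_{\max}(t)\leqslant 0$ the conclusion is immediate, so it suffices to bound $v_{\max}$ at times when it is positive. At such times Lemma~\ref{bdoflambda} gives $(\lambda f)(x_{0})v_{\max}\leqslant\lambda_{2}\max_{S^n}|f|\cdot v_{\max}$. To control $\lambda'(t)f(x_{0})$ I invoke Corollary~\ref{boundoflambda_t}: for $n\geqslant 4$ the two-sided bound $|\lambda'|\leqslant\Lambda_{0}$ yields $\lambda'(t)f(x_{0})\leqslant\Lambda_{0}\max_{S^n}|f|$ at once; for $n=3$ only the one-sided estimate $\lambda'(t)\leqslant\Lambda_{0}$ is available, but hypothesis (i) of Theorem~\ref{main} forces $f\geqslant 0$, which converts the one-sided estimate into the same upper bound. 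This is the one place where the dimensional assumption and the sign condition on $f$ in the case $n=3$ are jointly essential, and it is the only genuinely delicate point in the argument. Young's inequality $\lambda_{2}\max|f|\cdot v\leqslant v^{2}/2+(\lambda_{2}\max|f|)^{2}/2$ then turns the differential inequality into
$$\frac{d}{dt}v_{\max}(t)\leqslant-\frac{v_{\max}(t)^{2}}{2}+\frac{(\lambda_{2}\max_{S^n}|f|)^{2}}{2}+\Lambda_{0}\max_{S^n}|f|,$$
whose right-hand side is nonpositive as soon as $v_{\max}(t)\geqslant\sqrt{2(\lambda_{2}\max_{S^n}|f|)^{2}+2\Lambda_{0}\max_{S^n}|f|}$.

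Standard ODE barrier comparison therefore yields
$$v_{\max}(t)\leqslant\max\!\Big\{v_{\max}(0),\;\sqrt{2(\lambda_{2}\max_{S^n}|f|)^{2}+2\Lambda_{0}\max_{S^n}|f|}\Big\}.$$
Finally, Lemma~\ref{bdoflambda} also gives $\lambda(0)\leqslant\lambda_{2}$, so $v_{\max}(0)\leqslant\lambda_{2}\max_{S^n}|f|-\min_{S^n}R(0)$; taking negatives and using the definition of $C_{0}$ recovers exactly $R-\lambda(t)f\geqslant C_{0}$ pointwise and for all $t\geqslant 0$.
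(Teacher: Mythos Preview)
Your proof is correct and follows essentially the same route as the paper: both apply the parabolic maximum principle to $\lambda(t)f-R$ via its evolution equation \eqref{eeforR}, and both handle the $n=3$ case by combining the one-sided bound $\lambda'\leqslant\Lambda_0$ with the sign condition $f\geqslant0$. The only difference is packaging---the paper writes a linear operator $L=\partial_t-(n-1)\Delta_g+(\lambda f-C_0)$ and checks $L(\lambda f-R+C_0)\leqslant0$ with nonnegative zeroth-order coefficient, whereas you track $v_{\max}(t)$ directly and run an ODE barrier; the algebra and the resulting constant $C_0$ are the same.
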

  \begin{proof}
    Define
    $$L=\partial_t-(n-1)\Delta_g+(\lambda f-C_0).$$
    By a simple calculation, \eqref{eeforR} and our choice of $C_0$, we can get
    \begin{eqnarray*}
      L(\lambda f-R+C_0)&=&\partial_t(\lambda f-R)-(n-1)\Delta_g(\lambda f-R)+(\lambda f-C_0)(\lambda f-R+C_0)\\
      &=&\lambda^\prime f+(\lambda f)^2-R^2-C_0^2+C_0R\\
      &\leqslant& \Lambda_0\max_{S^n}|f|+(\lambda_2\max_{S^n}|f|)^2-\frac{C_0^2}{2}\leqslant0.
    \end{eqnarray*}
    Moreover, it is easy to see that $\lambda f-C_0\geqslant0$ and $(\lambda f-R+C_0)(0)\leqslant0$ due to the choice of $C_0$. Hence, we can apply the maximum principle to operator $L$ to get $\lambda f-R+C_0\leqslant0$, which proves the assertion.
  \end{proof}
  Once we have the positivity-preserving property of $\int_{S^n}fu^{2^*}~d\mu_{S^n}$, the boundedness of $\lambda(t)$ and the uniform lower boundedness of $R$, we can follow exactly the same scheme in \cite{cx} to show that the flow \eqref{eeforu} can not blow up in finite time which is the following proposition.
  \begin{proposition}
    The flow \eqref{eeforu} has a unique smooth solution which is defined on $[0,+\infty)$.
  \end{proposition}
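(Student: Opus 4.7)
The plan is to combine short-time parabolic existence with the a priori estimates already assembled in the preceding lemmas to rule out finite-time blow-up. Since \eqref{eeforu} is a (quasilinear) parabolic equation with smooth positive initial datum $u_0$, standard theory yields a unique smooth solution on a maximal interval $[0,T^*)$ with $T^*\in(0,+\infty]$. Assuming $T^*<\infty$ for contradiction, I will produce uniform $C^{k,\alpha}$ bounds on $[0,T^*)$ and use them to extend the solution smoothly past $T^*$.

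The first step is an $L^\infty$ upper bound on $u$. Writing $\partial_t\log u=-\tfrac{n-2}{4}(R-\lambda f)$ and invoking the lower bound $R-\lambda(t)f\geqslant C_0$ from Lemma \ref{uniformboundofR} gives $\partial_t\log u\leqslant-\tfrac{n-2}{4}C_0$, hence $u(t)\leqslant u_0\,e^{K_1 t}$ on $[0,T^*)$. The positive lower bound $u\geqslant c_0>0$ is the delicate half: one needs an $L^\infty$ upper bound on $R$ (equivalently, a lower bound on $\lambda f-R$), after which $\partial_t\log u$ is controlled from below and $u$ cannot collapse. I would obtain this upper bound on $R$ by a parabolic Moser-type iteration applied to the evolution equation \eqref{eeforR} for $\lambda f-R$, following the scheme of Chen--Xu \cite{cx}; the key inputs are the volume-preservation $\dashint_{S^n}u^{2^*}\,d\mu_{S^n}\equiv\dashint_{S^n}u_0^{2^*}\,d\mu_{S^n}$, the boundedness of $\lambda(t)$ and $\lambda'(t)$ from Lemma \ref{bdoflambda} and Corollary \ref{boundoflambda_t}, and the $L^\infty$ upper bound on $u$ just obtained.

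Once $u$ is uniformly confined to a compact subinterval of $(0,\infty)$ on $[0,T^*)$, the equation \eqref{eeforu} becomes a linear parabolic equation with bounded smooth coefficients, and standard $L^p$-parabolic theory combined with Schauder estimates yields uniform $C^{k,\alpha}$ bounds for every $k\geqslant 0$. This permits a smooth extension past $T^*$, contradicting the maximality of $T^*$; hence $T^*=+\infty$. Uniqueness is a routine consequence of the quasilinear structure in the smooth positive regime (difference of two solutions satisfies a linear parabolic equation and Gronwall closes the argument).

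The main obstacle I expect is the upper bound on $R$ (equivalently, the positive lower bound on $u$). Unlike the positive-$f$ framework of \cite{cx}, one cannot rely on a fixed sign of $f$ to simplify the iteration, and the sign-changing contribution of $\lambda f$ must be absorbed carefully; the essential ingredients are precisely the three estimates just established, namely positivity of $\int_{S^n}fu^{2^*}\,d\mu_{S^n}$ (Lemma \ref{pospro}), boundedness of $\lambda(t)$ (Lemma \ref{bdoflambda}), and the one-sided bound $R-\lambda(t)f\geqslant C_0$ (Lemma \ref{uniformboundofR}), together with the control on $\lambda'(t)$ provided by Corollary \ref{boundoflambda_t} that distinguishes the dimensional cases $n=3$ and $n\geqslant 4$.
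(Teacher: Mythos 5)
Your proposal is correct and follows essentially the same route as the paper, which itself simply invokes the Chen--Xu scheme once the three ingredients (positivity of $\int_{S^n}fu^{2^*}\,d\mu_{S^n}$, boundedness of $\lambda(t)$, and the uniform lower bound on $R-\lambda(t)f$) are in place. Your fleshing out of the skeleton --- the exponential upper bound on $u$ from $R-\lambda f\geqslant C_0$, the Moser iteration for the upper bound on $R$ to prevent collapse of $u$, and the bootstrap to $C^{k,\alpha}$ --- matches the deferred argument of \cite{cx} that the paper relies on.
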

\qed
 \section{Blow-up analysis}
In this section, we dealt with the convergence of the flow \eqref{eeforu}. As an initial step, we notice the following $L^p$ convergence which is one of the key ingredients.
\begin{proposition}
  \label{lpconvergence}
  For $0<p<+\infty$ there holds
  $$\int_{S^n}|R-\lambda(t)f|^p~d\mu_g\rightarrow0,\qquad\mbox{as}~~t\rightarrow+\infty.$$
\end{proposition}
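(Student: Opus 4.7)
The plan is to split the argument into two pieces: first, establish the $L^2$ convergence $F_2(t)\to 0$; second, bootstrap to $L^p$ for $p>2$ via interpolation against a uniform higher-integrability bound, while the range $0<p<2$ will come for free by H\"older's inequality.

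For the $L^2$ step, the starting point is the monotonicity formula for $E_f[u(t)]$. Since $\int_{S^n}fu^{2^*}\,d\mu_{S^n}>0$ throughout the flow by Lemma \ref{pospro}, we have $E_f[u(t)]>0$; the monotone nonincreasing character together with this nonnegativity immediately yields the $L^1$-in-time integrability $\int_0^\infty F_2(t)\,dt<\infty$. To upgrade this to the pointwise decay $F_2(t)\to 0$, I would revisit the computation performed in the proof of Lemma \ref{boundofF_2}, which already delivers the one-sided differential inequality $F_2'(t)\leqslant CF_2(t)\bigl(1+\sqrt{F_2(t)}\bigr)$. Combined with the uniform boundedness of $F_2$ established there, this reads $F_2'(t)\leqslant \tilde C F_2(t)$. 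Consequently, for any $s\in[t-1,t]$ one has $F_2(t)\leqslant e^{\tilde C}F_2(s)$, and integrating in $s$ gives $F_2(t)\leqslant e^{\tilde C}\int_{t-1}^{t}F_2(s)\,ds\to 0$ as $t\to\infty$. This settles $p=2$, and the range $0<p<2$ follows from H\"older's inequality together with the volume-preserving property of the flow.

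For $p>2$, my plan is to combine the $L^2$ decay with uniform bounds on $F_{2k}(t):=\dashint_{S^n}|R-\lambda f|^{2k}\,d\mu_g$ for every integer $k\geqslant 1$. The strategy is to multiply \eqref{eeforR} by $|\lambda f-R|^{2k-2}(\lambda f-R)$, integrate over $S^n$ with respect to $d\mu_g$, account for the evolution of the volume element $\partial_t d\mu_g=\tfrac{n}{2}(\lambda f-R)\,d\mu_g$, and use integration by parts together with the sharp Sobolev inequality on $(S^n,g)$ (exactly as in the proof of Lemma \ref{boundofF_2}) to absorb the gradient term. This should produce a differential inequality of the schematic form $F_{2k}'(t)\leqslant C_k F_{2k}(t)\bigl(1+F_{2k}(t)^{\alpha_k}\bigr)$, leading to $\sup_{t\geqslant 0}F_{2k}(t)\leqslant C_k$ by the same Gronwall-type argument used for $F_2$ in Lemma \ref{boundofF_2}. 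A H\"older interpolation $\dashint_{S^n}|R-\lambda f|^p\,d\mu_g\leqslant F_2(t)^{\theta}\,F_{2k}(t)^{1-\theta}$ with $2k>p$ and $\theta\in(0,1)$ then closes the argument.

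The main obstacle is the higher-integrability estimate in the last paragraph. The reaction term $(\lambda f-R)R$ in \eqref{eeforR} has indefinite sign; when $f$ is sign-changing it cannot be controlled pointwise by the diffusion term and must instead be handled through integral Sobolev estimates, using the uniform lower bound for $R-\lambda f$ from Lemma \ref{uniformboundofR} and the boundedness of $\lambda(t)$ from Lemma \ref{bdoflambda}. This is also the step where the dimensional restriction $n\geqslant 4$ should reappear, exactly as in the proof of Lemma \ref{boundofF_2}.
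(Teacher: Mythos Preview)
Your overall strategy---establish $F_2(t)\to 0$ from the energy identity plus a differential inequality, then bootstrap to general $p$---is correct and is exactly the scheme of \cite[Lemma~3.2]{cx} to which the paper defers. Your argument for $p=2$ and the H\"older step for $0<p<2$ are fine.

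There is, however, a gap in your plan for $p>2$. The Gronwall-type argument in Lemma~\ref{boundofF_2} hinges on the time-integrability $\int_0^\infty F_2(t)\,dt<\infty$, which comes for free from Lemma~\ref{eeforE_f[u]&R}~(i). For $k\geqslant 2$ there is no such a priori bound on $\int_0^\infty F_{2k}(t)\,dt$: even if you succeed in deriving $F_{2k}'\leqslant C_k F_{2k}(1+F_{2k}^{\alpha_k})$, the auxiliary function $w_k(t)=\int_0^{F_{2k}(t)}(1+s^{\alpha_k})^{-1}\,ds$ satisfies only $w_k'\leqslant C_kF_{2k}$, and you cannot integrate this without knowing $\int_0^t F_{2k}<\infty$. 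The way \cite{cx} (following Brendle and Schwetlick--Struwe) closes the loop is \emph{not} to first bound all $F_{2k}$ uniformly and then interpolate, but rather to keep the negative Sobolev term $-c\bigl(\dashint|\lambda f-R|^{p\cdot 2^*/2}\,d\mu_g\bigr)^{2/2^*}$ in the differential inequality for $F_p$, integrate in time using the already-established decay at exponent $p$, and thereby pass inductively from $p$ to $p\cdot\tfrac{2^*}{2}=\tfrac{np}{n-2}$. Your interpolation idea enters only at the end of each inductive step, not as a replacement for it.

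A second point: the proposition is stated (and needed) for all $n\geqslant 3$, yet your Step~1 quotes Lemma~\ref{boundofF_2}, which is proved only for $n\geqslant 4$. In dimension $3$ the coefficient $\tfrac{4-n}{2}$ in the $F_2$-identity changes sign and the rearrangement used there fails; one must instead exploit the one-sided bound $\lambda f-R\leqslant|C_0|$ from Lemma~\ref{uniformboundofR} (valid because $f\geqslant 0$ when $n=3$) together with interpolation against the Sobolev term to recover a usable differential inequality. So the restriction $n\geqslant 4$ does \emph{not} reappear in the proposition---it is an artefact of the shortcut taken in Lemma~\ref{boundofF_2}, and your Step~3 should not anticipate it.
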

\begin{proof}
  Since the proof is exactly the same as in \cite[Lemma 3.2]{cx}, we omit it.
\end{proof}
\subsection{Compactness-Concentration}
Now, in order to prove the convergence, we have to bound the conformal factor $u$ uniformly. However, one, in general, can not realize this uniform bound directly. Here, we find the Compactness-Concentration theorem in \cite{ss} serving good purpose for us. Thus, we state this theorem whose proof can be found in \cite[Theorem 3.1]{ss}.
\begin{theorem}[Schwetlick \& Struwe]\label{cc}
Assume that $(M,g_0)$ is a compact Riemannian manifold without boundary. Let $g_k=u_k^{4/(n-2)}g_0$ with $0<u_k\in C^\infty(M,g_0)$ be a family of conformal metrics with unit volume and satisfying
\begin{equation}\label{integralboundcondition}
\int_{M}R_{g_k}~d\mu_{g_k}\leqslant C_1,\qquad \int_{M}|R_{g_k}-\int_{S^n}R_{g_k}~d\mu_{g_k}|^p~d\mu_{g_k}\leqslant C_1,
\end{equation}
for all $k\in\mathbb{N}$ and $p>\frac n2$. Then either\\
{\upshape(i)} the sequence $u_k$ is uniformly bounded in $W^{2,p}(M,g_0)$; or\\
{\upshape(ii)} there exists a subsequence $(u_k)_k$ (relabelled) and finitely many points\\
$x_1, x_2,\dots, x_m\in M$ such that for any $r>0$ and any $i\in\{1,\dots, m\}$ there holds
\begin{equation}
  \label{concentration}
  \liminf_{k\rightarrow+\infty}\bigg(\int_{B_r(x_i)}|R_{g_k}|^\frac{n}{2}~d\mu_{g_k}\bigg)^\frac 2n\geqslant n(n-1)\omega_n^\frac 2n,
\end{equation}
where $B_r(x_i)$ is the geodesic ball with radius $r$ and center $x_i$. Moreover, the sequence $(u_k)_k$ is bounded in $W^{2,p}$ on any compact subset of $(M\backslash\{x_1,x_2,\dots, x_m\},g_0)$.
\end{theorem}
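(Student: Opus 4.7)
The approach I would take is the concentration-compactness paradigm of Lions, adapted to Yamabe-type critical equations. The dichotomy reflects the fact that the $L^{n/2}$-mass of $R_{g_k}$ either spreads out enough to yield uniform regularity for $u_k$, or it concentrates at finitely many points, each absorbing a definite geometric quantum of curvature determined by the sharp Sobolev inequality.

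\textbf{Setup and $\varepsilon$-regularity.} The conformal law $R_{g_k}=u_k^{1-2^*}(-c_n\Delta_{g_0}u_k+R_{g_0}u_k)$ recasts the hypothesis as a critical semilinear equation, with the unit volume constraint $\int_M u_k^{2^*}\,d\mu_{g_0}=1$. Writing this as $-c_n\Delta_{g_0}u_k+R_{g_0}u_k=V_ku_k$ with potential $V_k=R_{g_k}u_k^{2^*-2}$, the identity
$$\|V_k\|_{L^{n/2}(B_r,g_0)}=\left(\int_{B_r(x_0)}|R_{g_k}|^{n/2}\,d\mu_{g_k}\right)^{2/n}$$
isolates the conformally invariant smallness parameter. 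A Moser/De Giorgi iteration then establishes the key local estimate: there exists $\varepsilon_0>0$, depending only on $(M,g_0)$ and $C_1$, such that if the above quantity is less than $\varepsilon_0$ on $B_{2r}(x_0)$ for all large $k$, then $\|u_k\|_{L^\infty(B_r(x_0))}$ is bounded in terms of $\|u_k\|_{L^{2^*}}\leqslant 1$. Combining this pointwise bound with the $L^p$-oscillation control on $R_{g_k}$ from \eqref{integralboundcondition} and standard linear elliptic regularity upgrades the estimate to a uniform $W^{2,p}(B_r(x_0))$ bound.

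\textbf{Covering argument.} Define the concentration set
$$\Sigma=\Big\{x\in M:\liminf_{k\to\infty}\int_{B_r(x)}|R_{g_k}|^{n/2}\,d\mu_{g_k}\geqslant n(n-1)\omega_n^{2/n}\text{ for every }r>0\Big\}.$$
Since the total mass $\int_M|R_{g_k}|^{n/2}\,d\mu_{g_k}$ is uniformly bounded via \eqref{integralboundcondition} and H\"older's inequality, a standard Vitali covering argument forces $\Sigma$ to be finite. For any compact $K\Subset M\setminus\Sigma$ one can cover $K$ by finitely many small balls on which the $\varepsilon$-regularity applies, which yields uniform $W^{2,p}(K)$ bounds and hence alternative (ii); if $\Sigma=\emptyset$ these local bounds patch together to give the global bound of alternative (i).

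\textbf{Main obstacle: the sharp threshold.} The principal difficulty is replacing the soft constant $\varepsilon_0$ by the geometric value $n(n-1)\omega_n^{2/n}$ appearing in \eqref{concentration}. At each $x_i\in\Sigma$, I would run a blow-up analysis: select $y_k\to x_i$ with $u_k(y_k)=\max_{B_{\rho}(x_i)}u_k\to\infty$, rescale by $\lambda_k=u_k(y_k)^{2/(n-2)}$, and pass to the limit to obtain a positive entire solution $v_\infty$ on $\mathbb{R}^n$ of $-c_n\Delta v=\bar c\,v^{2^*-1}$ for some $\bar c>0$. The Caffarelli--Gidas--Spruck classification identifies $v_\infty$ as a standard bubble, whose total scalar curvature $L^{n/2}$-mass evaluates to exactly $n(n-1)\omega_n^{2/n}$. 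The delicate points here are preserving the full bubble mass during the rescaling (using the $L^p$-oscillation bound to rule out escape of mass to spatial infinity) and handling the possibility of several bubbles clustering at a single $x_i$, which is dealt with by iterating the selection procedure until no further concentrated profile remains.
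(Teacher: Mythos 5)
Your outline is essentially sound, but note first that the paper does not prove this statement at all: it is quoted verbatim as Theorem~3.1 of Schwetlick--Struwe \cite{ss}, so the only meaningful comparison is with the original proof there. Your strategy (conformally invariant $\varepsilon$-regularity, a covering argument to get a finite concentration set, and local elliptic bootstrap off that set) is exactly the skeleton of the cited proof. The one genuine divergence is how the sharp threshold $n(n-1)\omega_n^{2/n}$ is obtained. Schwetlick--Struwe get it \emph{directly} inside the Moser-type iteration: testing the equation for $u_k$ with $\eta^2u_k^{1+2\delta}$ and applying the sharp Sobolev inequality (whose optimal constant is precisely $Y(S^n)=n(n-1)\omega_n^{2/n}$), the critical term $\int\eta^2|R_{g_k}|u_k^{2^*+2\delta}\,d\mu_{g_0}$ can be absorbed whenever $\bigl(\int_{B_{2r}}|R_{g_k}|^{n/2}d\mu_{g_k}\bigr)^{2/n}$ lies strictly below that constant; improved integrability of $u_k$ then makes $V_k=R_{g_k}u_k^{2^*-2}$ subcritical and the $W^{2,p}$ bound follows from the $p>n/2$ oscillation hypothesis. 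Your alternative route through a soft $\varepsilon_0$, blow-up, and the Caffarelli--Gidas--Spruck classification can be made to work, but it is heavier and carries extra obligations you only gesture at: you must justify that the rescaled curvatures converge to a positive constant $\bar c$ (this needs $\overline{R_{g_k}}$ bounded below, which follows from the Yamabe functional, plus the conformal invariance of the $L^p$ oscillation bound under rescaling), that the entire limit is nontrivial and carries the full bubble mass, and that no curvature escapes between scales. The direct sharp-Sobolev absorption avoids all of this, which is why the original argument is preferable; but as a blind reconstruction your plan identifies the right dichotomy and the right source of the constant.
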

\begin{remark}\label{rk1}
  Notice that the assumption of unit volume of $g_k$ in the theorem is not critical. In fact, only if the volume is uniformly bounded, one then has the same conclusion. Moreover, from the proof of the theorem, one can, in fact, conclude that $\max_{B_r(x_i)}u_k\rightarrow+\infty$ as $k\rightarrow+\infty$ for any $r>0$ and $i=1, \dots, m$.
\end{remark}
Now, we are ready to apply this theorem to our flow. Before doing so, let us set up some notations. Choose an arbitrary time sequence $(t_k)_k\subset[0,+\infty)$ with $t_k\rightarrow+\infty$ as $k\rightarrow+\infty$. We set
$$u_k=u(t_k),\quad g_k=g(t_k),\quad R_k=R_{g_k}\quad\mbox{and}\quad d\mu_k=d\mu_{g_k}.$$
\begin{lemma}
  \label{compactness}
 If $u_k$ is bounded in $W^{2,p}(S^n,g_{S^n})$ for $p>\frac n2$, then there exists $0<u_\infty\in W^{2,p}(S^n,g_{S^n})$ such that $u_k\rightarrow u_\infty$ in $W^{2,p}(S^n,g_{S^n})$ as $k\rightarrow+\infty$. In addition, if we let $g_\infty=u_\infty^{4/(n-2)}g_{S^n}$, then $g_\infty$, up to a constant multiple, has the scalar curvature $f$.
\end{lemma}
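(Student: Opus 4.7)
Since $p>n/2$ the embedding $W^{2,p}(S^n)\hookrightarrow C^{0,\alpha}(S^n)$ is compact, so after passing to a subsequence $u_k\rightharpoonup u_\infty$ weakly in $W^{2,p}$ and $u_k\to u_\infty$ uniformly, with $u_\infty\geqslant 0$. Volume preservation gives $\int_{S^n}u_\infty^{2^*}\,d\mu_{S^n}=\int_{S^n}u_0^{2^*}\,d\mu_{S^n}>0$, so $u_\infty\not\equiv 0$. By Lemma \ref{bdoflambda} we may further assume $\lambda(t_k)\to\lambda_\infty\in[\lambda_1,\lambda_2]$; in particular $\lambda_\infty>0$.

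Next I would pass to the limit in the pointwise identity $-c_n\Delta u_k+n(n-1)u_k=R_ku_k^{2^*-1}$. The crucial observation is that, since $d\mu_k=u_k^{2^*}d\mu_{S^n}$,
\[
\int_{S^n}\bigl|(R_k-\lambda(t_k)f)u_k^{2^*-1}\bigr|^{p}\,d\mu_{S^n}=\int_{S^n}|R_k-\lambda(t_k)f|^{p}\,u_k^{p(2^*-1)-2^*}\,d\mu_k,
\]
and the exponent $p(2^*-1)-2^*$ is nonnegative whenever $p\geqslant 2n/(n+2)$, which is automatic under $p>n/2$ for $n\geqslant 3$. The $C^0$-bound on $u_k$ and Proposition \ref{lpconvergence} then force this integral to tend to zero, and combined with the uniform convergence $\lambda(t_k)f u_k^{2^*-1}\to\lambda_\infty f u_\infty^{2^*-1}$ we obtain $R_ku_k^{2^*-1}\to\lambda_\infty f u_\infty^{2^*-1}$ strongly in $L^p(S^n,d\mu_{S^n})$. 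Matching with the weak $L^p$-limit of $-c_n\Delta u_k+n(n-1)u_k$ identifies the limit equation
\[
-c_n\Delta u_\infty+n(n-1)u_\infty=\lambda_\infty f u_\infty^{2^*-1},
\]
and at the same time upgrades $u_k\to u_\infty$ from weak to strong in $W^{2,p}$ via invertibility of $-c_n\Delta+n(n-1)$ on $S^n$.

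It remains to prove $u_\infty>0$ pointwise. I would recast the limit equation as $-c_n\Delta u_\infty=V u_\infty$ with $V:=\lambda_\infty f u_\infty^{2^*-2}-n(n-1)$; only the $C^0$-bound on $u_\infty$ is needed to place $V\in L^\infty(S^n)$. Setting $M:=\|V\|_\infty$ one has $(-c_n\Delta+M)u_\infty=(V+M)u_\infty\geqslant 0$, so $u_\infty$ is a nonnegative nontrivial supersolution of an elliptic operator with positive zeroth-order coefficient, and the strong maximum principle forces $u_\infty>0$ on $S^n$. Since $\lambda_\infty>0$, the metric $g_\infty=u_\infty^{4/(n-2)}g_{S^n}$ has scalar curvature $\lambda_\infty f$, which is $f$ up to the positive multiplicative constant $\lambda_\infty$.

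The main obstacle is the \emph{coordination} of weak and strong convergences. Proposition \ref{lpconvergence} yields $\|R_k-\lambda(t_k)f\|_{L^p(d\mu_k)}\to 0$, but one cannot transfer this directly to $L^p(d\mu_{S^n})$ without a uniform positive lower bound on $u_k$, which is unavailable a priori. The remedy above is to absorb an extra factor $u_k^{2^*-1}$ into the pairing before converting measures, after which the admissible range $p\geqslant 2n/(n+2)$ is automatically compatible with the regularity threshold $p>n/2$ required for compactness, and strict positivity of $u_\infty$ is secured only a posteriori through the strong maximum principle.
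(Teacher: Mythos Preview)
Your argument is correct and follows precisely the ``standard'' route the paper alludes to (the paper omits the proof entirely, referring to \cite[Lemma~4.5]{cx}). The only point worth flagging is that the subtlety you highlight---converting $\|R_k-\lambda(t_k)f\|_{L^p(d\mu_k)}\to 0$ into information with respect to $d\mu_{S^n}$ without an a~priori lower bound on $u_k$---is handled cleanly by your device of pairing with $u_k^{2^*-1}$ before changing measures; the exponent computation $p(2^*-1)-2^*\geqslant 0\Longleftrightarrow p\geqslant 2n/(n+2)$ and the inequality $n/2>2n/(n+2)$ for $n\geqslant 3$ are both correct, so the hypothesis $p>n/2$ suffices. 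The remaining steps (weak-to-strong upgrade via invertibility of $-c_n\Delta+n(n-1)$, and strict positivity of $u_\infty$ via the strong maximum principle applied to $(-c_n\Delta+M)u_\infty\geqslant 0$) are standard and sound.
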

\begin{proof}
  Since the proof is rather standard, we omit the detail. The reader can also refer to \cite[Lemma 4.5]{cx}.
\end{proof}
It follows from Proposition \ref{lpconvergence} and Minkowski's inequality that
\begin{equation}
  \label{lpscalarcurvaturebound}
  \int_{S^n}R_{g_k}~d\mu_{g_k}\leqslant C_1,\qquad \int_{S^n}|R_{g_k}-\int_{S^n}R_{g_k}~d\mu_{g_k}|^p~d\mu_{g_k}\leqslant C_1,
\end{equation}
for any $p>n/2$. Hence, condition \eqref{integralboundcondition} holds true for $g_k$. In addition, our flow preserves the volume which implies that $vol(g_k)=vol(g_0)$. In particular, volume of $g_k$ is uniformly bounded. Hence, by Remark \ref{rk1} we can apply Theorem \ref{cc} to this sequential metrics $(g_k)_k$. If the case (i) in Theorem \ref{cc} happens to $g_k$, i.e., there exists a uniform positive constant $C$ such that $||u_k||_{W^{2,p}}\leqslant C$, then Lemma \ref{compactness} shows that the prescribed function $f$ can be the scalar curvature of some conformal metric $g$. Therefore, to prove our theorems, it suffices to show that there exists some time sequence $t_k$ such that $||u_k||_{W^{2,p}}\leqslant C$. However, it is hard, in general, to realize this directly. Here, we adopt the contradiction argument. We assume that $f$ can not be realized as the scalar curvature of any conformal metric on $S^n$. This means that for arbitrary time sequence $t_k$ with the corresponding sequential metric $(g_k)_k$, the case (ii) will occur. Hence, one can expect that the blow-up phenomenon will appear and the blow-up analysis has to come into play.
\subsection{Blow-up analysis}\label{blowup}
In this subsection, we mainly perform the blow-up analysis for the sequential metrics $(g_k)_k$. In the proceeding proof, we always assume that the second case in Theorem \ref{cc} occurs to $(g_k)_k$. The key step for deriving the blow-up behavior is to uniformly bound the normalized function $v$ which is defined in \eqref{formulaforv} below. Here, our method is different from that in \cite{cx}. Roughly speaking, Chen \& Xu's method is to estimate the first eigenvalue of $g_k$, and then apply higher dimensional version of Proposition A in \cite{cy4} to claim that $v_k:=v(\cdot,t_k)$ is bounded in $L^{p_0}$ for some $p_0$ slightly greater than $2^*$. Finally, by an iteration argument, $v_k$ is indeed bounded in $L^p$ for some $p>n$. Once this holds, Sobolev embedding theory immediately implies that $v_k$ is uniformly bounded. While our method, inspired by Struwe \cite{str}, is to apply Theorem \ref{cc} to the corresponding sequential normalized metrics $(h_k)_k$ (see the exact definition below). Then, under the assumption of cases (ii) in Theorem \ref{cc} occurring to $(h_k)_k$, we are managed to estimate the center of mass of $h_k$: $\int_{S^n}x~d\mu_{h_k}$ and show that $\int_{S^n}x~d\mu_{h_k}\neq0$ for large $k$ which violates the normalized condition \eqref{normalizedcondition}. Hence, case (ii) cannot occur. In other words, case (i) in Theorem \ref{cc} will happen. But in this case, Sobolev embedding theory shows that $v_k$ is uniformly bounded. Notice that, in the proof, the condition (ii) in Theorem \ref{main} and \ref{main1} will play a crucial role.

Up to here, let us define the so-called normalized flow. It is a well known fact that, for every smoothly varying family of metrics $g(t)=u(t)^{4/(n-2)}g_{S^n}$, there exists a family of conformal transformations $\varphi(t):S^n\mapsto S^n$ such that
\begin{equation}
  \label{normalizedcondition}
  \int_{S^n}x~d\mu_h=0,\qquad\mbox{for}~~t\ge0,
\end{equation}
where $h=\varphi^*g$ and $x=(x^1, x^2,\dots, x^{n+1})$. In fact, let $\pi:S^n\backslash\{0, 0,\dots, 0, -1\}\mapsto\mathbb{R}^n$ be the stereographic projection from the south pole to $n$-plane and set, for fixed $q\in\mathbb{R}^n, r>0$, $\delta_{q,r}(z)=q+rz$ for $z\in\mathbb{R}^n$. Then $\varphi(t)$ can be written as
$$\varphi(t)=\pi^{-1}\circ\delta_{q(t),r(t)}\circ\pi.$$
The pullback metric $h=\varphi^*g$ is called the normalized metric. In terms of $u$, it can be written as $h=v^{4/(n-2)}g_{S^n}$, where
\begin{equation}
  \label{formulaforv}
  v=(u\circ\varphi)|\det(d\varphi)|^{\frac{n-2}{2n}},
\end{equation}
which satisfies the equation
\begin{equation}
  \label{equationofv}
  -c_n\Delta_{S^n}v+n(n-1)v=R_hv^{2^*-1},
\end{equation}
where $R_h=R_g\circ\varphi(t)$. Differentiating \eqref{formulaforv} w.r.t to $t$, we obtain the evolution equation
\begin{equation*}
  v_t=(u_t\circ\varphi)|\det(d\varphi)|^{\frac{n-2}{2n}}+\frac{n-2}{2n}v^{1-2^*}\mbox{div}_{S^n}(v^{2^*}\xi),
\end{equation*}
where $\xi=(d\varphi)^{-1}(d\varphi/dt)$ is the vector field on $S^n$.

Before continuing our argument, we want to point out that the behavior of $q(t)$ and $r(t)$ plays an important role in the blow-up analysis. Here, for sake of simplifying the calculation, we will follow the idea in \cite{ms}. That is, for each fixed $t_0\geqslant0$, we make a translation and a scale such that $q(t_0)=0$ and $r(t_0)=1$. Precisely speaking, as in \cite{ms}, for $t_0\geqslant 0$ fixed and $t\geqslant0$ close to $t_0$, let
$$\varphi_{t_0}(t)=\varphi(t_0)^{-1}\varphi(t).$$
Then
$$\varphi_{t_0}(t)\circ\psi=\psi_{q(t),r(t)},$$
where $\psi=\pi^{-1}$ and $\psi_{q,r}=\psi\circ\delta_{q,r}$. 

Now, given $t_0\geqslant0$, we consider a rotation mapping some $p={p}(t_0)\in(S^n)$ into the north pole $N=(0, 0, \dots, 1)$. Then $\varphi(t_0)$ can be expressed as $\varphi(t_0)=\psi_\epsilon\circ\pi$ for some $\epsilon=\epsilon(t_0)>0$, where $\psi_{\epsilon}(z)=\psi(\epsilon z)=\psi_{0,\epsilon}(z)$ by the notation above. Hence, in stereographic coordinates, $\varphi(t):=\varphi_{p(t),\epsilon(t)}$ is given by
$$\varphi(t)\circ\psi=\varphi(t_0)\circ\varphi_{t_0}(t)\circ\psi=\psi_\epsilon\circ\delta_{q,r}.$$

So, in the following, our calculations, involving any conformal transformation, are always at the each fixed time $t_0$. In this way, the conformal transformation has the expression: $\varphi_{p,\epsilon}=\psi_\epsilon\circ\pi$. However, we want to abuse the notation a bit to use the parameter $t$ instead of $t_0$ in the computation.

Also let us define
$$X_*=\bigg\{0<u\in C^\infty(S^n);\int_{S^n}u^{2^*}~d\mu_{S^n}=\omega_n\bigg\}.$$
Recall that $f$ satisfies $(\max_{S^n}|f|)\big/(\dashint_{S^n}f~d\mu_{S^n})<2^\frac 2n$. Hence, we can choose
$$\sigma=\frac12\Bigg[2^\frac2n\bigg(\frac{\dashint_{S^n}f~d\mu_{S^n}}{\max_{S^n}|f|}\bigg)-1\Bigg]>0,
$$
and set
\begin{equation}
  \label{gamma}
  \gamma=n(n-1)\big[(1+\sigma)\big]^\frac{n-2}{n}\bigg(\dashint_{S^n}f~d\mu_{S^n}\bigg)^\frac{2-n}{n}.
\end{equation}
With all notations above settled, we finally define the set
$$X_f=\bigg\{u\in X_*; \dashint_{S^n}fu^{2^*}~d\mu_{S^n}>0~~\mbox{and}~~E_f[u]\leqslant\gamma\bigg\}.$$
\begin{remark}
  \label{rk2}
  Notice that $X_f$ is not an empty set. In fact, when $u\equiv1$ we have $\int_{S^n}u^{2^*}=\omega_n$, $\dashint_{S^n}fu^{2^*}~d\mu_{S^n}=\dashint_{S^n}f~d\mu_{S^n}>0$ and $E_f[u]=n(n-1)(\dashint_{S^n}f~d\mu_{S^n})^{\frac{2-n}{n}}<\gamma$. Hence, $u\equiv1\in X_f$.
\end{remark}
\begin{proposition}
  \label{blowupbehavior}
  Let $u(t)$ be the smooth solution of the flow \eqref{eeforu} with initial data $u_0\in X_f$. Associated with the sequential metrics $g_k=u_k^{4/(n-2)}g_{S^n}$, we let $h_k=\varphi_k^*g_k=v_k^{4/(n-2)}g_{S^n}$ be the sequence of corresponding normalized metrics defined above, where $\varphi_k=\varphi_{p_k,\epsilon_k}$ with $p_k=p(t_k)$ and $\epsilon_k=\epsilon(t_k)$. Then, one has\\
  {\upshape(i)} There exists only one point $Q\in S^n$ such that concentration phenomenon in the sense of \eqref{concentration} can occur; up to a subsequence, there holds\\
  {\upshape(ii)} For any $r>0$, $\max_{B_r(Q)}u_k\rightarrow+\infty$ as $k\rightarrow+\infty$;\\
  {\upshape(iii)} $||v_k-1||_{C^{1,\alpha}}\rightarrow0$ as $k\rightarrow+\infty$ for $\alpha\in(0,1)$,\\
  {\upshape(iv)} $d\mu_k\rightarrow\omega_n\delta_Q$ weakly in the sense of measure, where $\delta_Q$ is the dirac measure and\\
  {\upshape(v)} $\varphi_k\rightarrow Q$ for almost every $x\in S^n$, and there exists a constant $\lambda_\infty\in[\lambda_1,\lambda_2]$ such that $\lambda_k\rightarrow\lambda_\infty$ with $\lambda_\infty f(Q)=n(n-1)$. In particular, $f(Q)>0$.
\end{proposition}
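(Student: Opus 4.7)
The plan is to apply the Schwetlick--Struwe Compactness--Concentration Theorem (Theorem \ref{cc}) to the normalized sequence $h_k = v_k^{4/(n-2)}g_{S^n}$ and to argue by contradiction that case (ii) of that theorem cannot occur for $(h_k)_k$, forcing $(v_k)_k$ to be uniformly bounded. First I would verify the hypotheses for $h_k$: the volume-preserving property of the flow gives $\vol(h_k) = \vol(g_k) = \vol(g_0)$, while conformal invariance of $\int|R|\,d\mu$ together with Proposition \ref{lpconvergence} and $\lambda(t)\leqslant\lambda_2$ (Lemma \ref{bdoflambda}) yields the integral bounds \eqref{integralboundcondition} for $h_k$ with any fixed $p>n/2$.

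The heart of the matter is to rule out case (ii) for $(h_k)_k$. Here I would use $R_{g_k} = \lambda_k f + o(1)$ in $L^{n/2}$, $\vol(g_k)=\omega_n$, and conformal invariance of $\int|R|^{n/2}\,d\mu$ to bound
\[
\int_{S^n}|R_{h_k}|^{n/2}\,d\mu_{h_k} = \int_{S^n}|R_{g_k}|^{n/2}\,d\mu_{g_k} \leqslant \bigl(\lambda_k\max_{S^n}|f|\bigr)^{n/2}\omega_n + o(1).
\]
A direct computation starting from $E_f[u_0]\leqslant\gamma$ and the explicit formula for $\lambda_2$ in Lemma \ref{bdoflambda} yields $\lambda_2\max_{S^n}|f| < n(n-1)\,2^{2/n}$, via the choice of $\sigma$ used to define $\gamma$ in \eqref{gamma}, so the right-hand side above is strictly less than $2\,(n(n-1))^{n/2}\omega_n$. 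Combined with the lower bound \eqref{concentration} at each concentration point, this forces at most one such point $Q$. If a concentration point does occur, Remark \ref{rk1} together with the fixed total volume $\omega_n$ shows that the mass of $v_k^{2^*}\,d\mu_{S^n}$ concentrates entirely at $Q$, i.e.\ $d\mu_{h_k}\rightharpoonup\omega_n\delta_Q$, whence $\int_{S^n}x\,d\mu_{h_k}\to\omega_n Q\neq 0$, contradicting the normalization \eqref{normalizedcondition} of $h_k$. Hence case (ii) cannot occur for $(h_k)_k$.

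Consequently case (i) of Theorem \ref{cc} applies: $(v_k)_k$ is bounded in $W^{2,p}$ and, up to a subsequence, converges in $C^{1,\alpha}$ to some $v_\infty\geqslant 0$, which is strictly positive by the conformal equation \eqref{equationofv} and standard elliptic regularity. Meanwhile $\lambda_k$ lies in the compact interval $[\lambda_1,\lambda_2]$, so passing to a further subsequence $\lambda_k\to\lambda_\infty$. Analyzing the conformal parameters $(p_k,\epsilon_k)$ attached to $\varphi_k$ I would show $\epsilon_k\to 0$ and $p_k\to Q$ for some $Q\in S^n$, whence $\varphi_k\to Q$ almost everywhere on $S^n$. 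Pulling back the $L^p$ convergence $R_{g_k}-\lambda_k f\to 0$ through $\varphi_k$ and passing to the limit in \eqref{equationofv} gives
\[
-c_n\Delta_{S^n}v_\infty + n(n-1)v_\infty = \lambda_\infty f(Q)\,v_\infty^{2^*-1},
\]
an equation with constant coefficient. By Obata's classification, every positive solution is the conformal factor of a standard round metric; the normalization \eqref{normalizedcondition} inherited by $v_\infty$ together with $\int v_\infty^{2^*}\,d\mu_{S^n}=\omega_n$ then pins down $v_\infty\equiv 1$, and substitution yields $\lambda_\infty f(Q)=n(n-1)$, proving (iii) and (v).

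The remaining assertions (i), (ii), (iv) follow by unwinding these limits in the original sequence. Since $h_k\to g_{S^n}$ in $C^{1,\alpha}$ and $g_k=(\varphi_k^{-1})^*h_k$, the metric $g_k$ is, to leading order, the pullback of the round metric by the degenerating conformal map $\varphi_k^{-1}$, which concentrates all volume at $Q$; this gives $\max_{B_r(Q)}u_k\to\infty$ for every $r>0$, identifies $Q$ as the unique concentration point in the sense of Theorem \ref{cc}, and yields $d\mu_k\rightharpoonup\omega_n\delta_Q$. The main obstacle I anticipate is the careful exclusion of case (ii) for $(h_k)_k$ in the second paragraph: quantifying the simple-bubble inequality so as to ensure $m\leqslant 1$ under the weaker hypothesis (ii) of Theorems \ref{main} and \ref{main1}, and ruling out any leakage of volume to a smooth background component so that the center-of-mass contradiction genuinely fires.
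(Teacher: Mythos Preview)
Your overall strategy matches the paper's: apply Theorem~\ref{cc} to the normalized sequence $(h_k)$, rule out case~(ii) via a center-of-mass contradiction, then identify the limit $v_\infty\equiv 1$ via Obata and read off the remaining conclusions. Your bound $\lambda_k\max_{S^n}|f|<n(n-1)\,2^{2/n}$ is exactly what the paper derives in \eqref{lbdabound}--\eqref{lbdafbound}, and it does force $m\leqslant 1$.

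The genuine gap is the step ``Remark~\ref{rk1} together with the fixed total volume $\omega_n$ shows that the mass of $v_k^{2^*}\,d\mu_{S^n}$ concentrates entirely at $Q$.'' Remark~\ref{rk1} only gives $\max_{B_r(Q)}v_k\to\infty$ and $W^{2,p}$-boundedness of $v_k$ on compact subsets of $S^n\setminus\{Q\}$; it does \emph{not} preclude a nonzero smooth background limit carrying part of the volume, so $d\mu_{h_k}\rightharpoonup\omega_n\delta_Q$ is unjustified at this stage. You correctly flag this as the main obstacle, but your proposal supplies no mechanism to resolve it.

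The paper does not attempt to prove full concentration. Instead it combines the lower bound \eqref{concentration} with $R_{h_k}=\lambda_k f\circ\varphi_k+o(1)$ in $L^{n/2}$ to obtain the \emph{quantitative} estimate
\[
\int_{B_r(Q)}d\mu_{h_k}\;\geqslant\;\tau^{-n/2}\omega_n+o(1),\qquad \tau=(1+\sigma)\,\frac{\max_{S^n}|f|}{\dashint_{S^n}f\,d\mu_{S^n}}<2^{2/n},
\]
so that strictly more than half the total volume lies in $B_r(Q)$. That suffices: one bounds $\bigl|\dashint_{S^n} x\,d\mu_{h_k}-Q\bigr|\leqslant 2(1-\tau^{-n/2})+r+o(1)$, and since $2\tau^{-n/2}-1>0$, taking $r$ small forces $\dashint_{S^n} x\,d\mu_{h_k}$ bounded away from $0$, contradicting \eqref{normalizedcondition}. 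The key point you are missing is that the simple-bubble hypothesis~(ii) is invoked a \emph{second} time here---not merely to cap the number of bubbles, but to guarantee the single bubble carries more than half the mass, which is what makes the center-of-mass argument fire even in the presence of a possible background component.
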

\begin{proof}
  (i). It follows from Lemma \ref{bdoflambda} and the choices of initial data $u_0$ and $\gamma$ that
  \begin{eqnarray}\label{lbdabound}
    \lambda(t_k)&\leqslant&\bigg[\frac{E_f[u_0]}{(n(n-1))^\frac 2n}\bigg]^\frac{n}{n-2}\nonumber\\
    &\leqslant&\Bigg[\frac{n(n-1)(1+\sigma)^\frac{n-2}{n}\Big(\dashint_{S^n}f~d\mu_{S^n}\Big)^\frac{2-n}{n}}{(n(n-1))^\frac 2n}\Bigg]^\frac{n}{n-2}\nonumber\\
    &=&n(n-1)(1+\sigma)\bigg(\dashint_{S^n}f~d\mu_{S^n}\bigg)^{-1},
  \end{eqnarray}
  which implies that
    \begin{eqnarray}\label{lbdafbound}
    \lambda(t_k)\bigg(\dashint_{S^n}|f|^\frac n2~d\mu_k\bigg)^\frac2n&\leqslant&
    n(n-1)(1+\sigma)\frac{\max_{S^n}|f|}{\dashint_{S^n}f~d\mu_{S^n}}.
  \end{eqnarray}
  From Proposition \ref{lpconvergence}, the estimates \eqref{lbdafbound} and the condition (ii) in Theorems \ref{main} and \ref{main1}, we can get the estimate
  \begin{eqnarray*}
    &&\liminf_{k\rightarrow+\infty}\bigg(\dashint_{S^n}|R_k|^\frac n2~d\mu_k\bigg)^\frac2n\\
    &&\quad\leqslant\liminf_{k\rightarrow+\infty}\bigg[\bigg(\dashint_{S^n}|R_k-\lambda(t_k)f|^\frac n2~d\mu_k\bigg)^\frac2n+\lambda(t_k)\bigg(\dashint_{S^n}|f|^\frac n2~d\mu_k\bigg)^\frac2n\bigg]\\
    &&\quad=\liminf_{k\rightarrow+\infty}\lambda(t_k)\bigg(\dashint_{S^n}|f|^\frac n2~d\mu_k\bigg)^\frac2n\\
    &&\quad\leqslant n(n-1)(1+\sigma)\frac{\max_{S^n}|f|}{\dashint_{S^n}f~d\mu_{S^n}}\\
    &&\quad<n(n-1)2^\frac2n.
  \end{eqnarray*}
  Now, suppose $\{x_1, \dots, x_m \}$, defined in the Theorem \ref{cc}, are concentration points with $m\geqslant2$. Let $0<r<\frac12\{\mbox{dist}(x_i,x_j);1\leqslant i<j\leqslant m\}$. It follows from \eqref{concentration} and the estimate above that
  \begin{eqnarray*}
    m[n(n-1)]^\frac n2&\leqslant&\sum_{i=1}^m\liminf_{k\rightarrow+\infty}\omega_n^{-1}\int_{B_r(x_i)}|R_k|^\frac n2~d\mu_k\\
    &\leqslant&\liminf_{k\rightarrow+\infty}\bigg[\sum_{i=1}^m\omega_n^{-1}\int_{B_r(x_i)}|R_k|^\frac n2~d\mu_k\bigg]\\
    &\leqslant&\liminf_{k\rightarrow+\infty}\omega_n^{-1}\int_{\cup_{i=1}^mB_r(x_i)}|R_k|^\frac n2~d\mu_k\\
    &\leqslant&\liminf_{k\rightarrow+\infty}\dashint_{S^n}|R_k|^\frac n2~d\mu_k<2[n(n-1)]^\frac n2,
  \end{eqnarray*}
  which implies that $m<2$ and thus contradicts with $m\geqslant2$. This shows that $m=1$, i.e. concentration point is unique.\vspace{0.3em}
  
  \noindent(ii) It follows directly from Remark \ref{rk1}.\vspace{0.3em}
  
  \noindent(iii) The proof will consist of several Claims below.

  {\bf Claim 1}: There exists a uniform constant $C>0$ such that $C^{-1}\leqslant v_k\leqslant C.$\\
  {\itshape Proof of Claim 1}: For normalized sequential metrics $(h_k)_k$, we note that
  $$\int_{S^n}\bigg|R_k\circ\varphi_k-\int_{S^n}R_k\circ\varphi_k~d\mu_{h_k}\bigg|^p~d\mu_k=\int_{S^n}\bigg|R_k-\int_{S^n}R_k~d\mu_k\bigg|^p~d\mu_{h_k},$$
  $$\int_{S^n}R_k\circ\varphi_k~d\mu_{h_k}=\int_{S^n}R_k~d\mu_k~~\mbox{and}~~vol(S^n,h_k)=vol(S^n,g_k)=\omega_n.$$
  Therefore, it is easy to see by \eqref{lpscalarcurvaturebound} that all the conditions in Theorem \ref{cc} hold for $(h_k)_k$. Hence, we can apply Theorem \ref{cc} to the sequence $(h_k)_k$. It means that there also have two alternatives for $(h_k)_k$. Now, if the second case in Theorem \ref{cc} happens to $(h_k)_k$, then we can follow the exact same proof of (i) to conclude that there exists the unique point $Q$ such that \eqref{concentration} holds for $(h_k)_k$. Hence, for sufficiently large $k$ and any $r>0$, we have by Proposition \ref{lpconvergence} and  \eqref{lbdabound} that
  \begin{eqnarray*}
    &&n(n-1)\omega_n^\frac 2n+o(1)\leqslant\bigg(\int_{B_r(Q)}|R_{h_k}|^\frac n2~d\mu_{h_k}\bigg)^\frac 2n\\
    &&\quad\leqslant\bigg(\int_{B_r(Q)}|R_{h_k}-\lambda(t_k)f\circ\varphi_k|^\frac n2~d\mu_{h_k}\bigg)^\frac 2n+\lambda(t_k)\max_{S^n}|f|\bigg(\int_{B_r(Q)}~d\mu_{h_k}\bigg)^\frac 2n\\
    &&\quad=o(1)+n(n-1)(1+\sigma)\frac{\max_{S^n}|f|}{\dashint_{S^n}f~d\mu_{S^n}}\bigg(\int_{B_r(Q)}~d\mu_{h_k}\bigg)^\frac 2n,
  \end{eqnarray*}
  which implies that
  \begin{equation*}
    \int_{B_r(Q)}~d\mu_{h_k}\geqslant \tau^{-\frac n2}\omega_n+o(1),\quad\mbox{where}~~\tau=(1+\sigma)\frac{\max_{S^n}|f|}{\dashint_{S^n}f~d\mu_{S^n}}.
  \end{equation*}
  Since $vol(S^n,h_k)=\omega_n$, we get
  \begin{equation}\label{v1}
    \int_{S^n\backslash B_r(Q)}~d\mu_{h_k}\leqslant(1-\tau^{-\frac n2})\omega_n+o(1).
  \end{equation}
  From \eqref{v1}, it follows that
  \begin{eqnarray*}
    \bigg|\Big|\dashint_{S^n}x~d\mu_{h_k}\Big|-1\bigg|&=&\bigg|\Big|\dashint_{S^n}x~d\mu_{h_k}\Big|-|Q|\bigg|\\
    &\leqslant&\bigg|\dashint_{S^n}x~d\mu_{h_k}-Q\bigg|\leqslant\dashint_{S^n}|x-Q|~d\mu_{h_k}\\
    &=&\omega_n^{-1}\int_{S^n\backslash B_r(Q)}|x-Q|~d\mu_{h_k}+\omega_n^{-1}\int_{B_r(Q)}|x-Q|~d\mu_{h_k}\\
    &\leqslant&2(1-\tau^{-\frac n2})+r+o(1).
  \end{eqnarray*}
  Notice that $\tau<2^{2/n}$. This implies that $2\tau^{-n/2}-1>0$. Now, by choosing $r=(2\tau^{-n/2}-1)/2$ and $k$ large enough, we then have
  \begin{eqnarray*}
    \bigg|\dashint_{S^n}x~d\mu_{h_k}\bigg|&\geqslant&1-2(1-\tau^{-\frac n2})-r+o(1)\\
    &=&\frac{2\tau^{-\frac n2}-1}{2}+o(1)>0.
  \end{eqnarray*}
  However, this contradicts with the fact that $h_k$ satisfies \eqref{normalizedcondition}. Such a contradiction shows that the second case can not happen to $(h_k)_k$. In other words, the first case in Theorem \ref{cc} will happen, that is, $v_k$ is uniformly bounded in $W^{2,p}(S^n,g_{S^n})$ for $p>\frac n2$. By Sobolev embedding theory, we conclude that there exists a positive constant $C$ such that $||v_k||_{C^{1,\alpha}(S^n)}\leqslant C$ for $\alpha=1-n/2p$. Let $$P(x):=n(n-1)+\sup_{k\in\mathbb{N}}\sup_{S^n}[-(\lambda(t_k)f\circ\varphi_k+C_0)v_k^{4/(n-2)}].$$
  Then it is easy to see that $P(x)$ is bounded. Moreover, by $v_k>0$ and Lemma \ref{uniformboundofR}, we have
  \begin{eqnarray*}
    0&\leqslant&(R_{h_k}-\lambda(t_k)f\circ\varphi_k-C_0)v_k^{2^*-1}\\
    &=&-c_n\Delta_{S^n}v_k+n(n-1)v_k-(\lambda(t_k)f\circ\varphi_k+C_0)v_k^{2^*-1}\\
    &\leqslant&-c_n\Delta_{S^n}v_k+P(x)v_k.
  \end{eqnarray*}
  Using \cite[Corollary A.3]{br1} and the fact that $\int_{S^n}v_k^{2^*}~d\mu_{g_{S^n}}=\omega_n$, we conclude that $v_k\geqslant C^{-1}$. This finishes the proof of Claim 1.

  {\bf Claim 2}: $v_k\rightarrow 1$ in $C^{1,\alpha}(S^n)$ with $\alpha\in(0,1)$.\\
  {\itshape Proof of Claim 2}. By the proof of Claim 1, we know that $v_k$ is uniformly bounded in $W^{2,p}(S^n,g_{S^n})$ for any $p>\frac n2$. Hence, Sobolev embedding theory shows that there exists $v_\infty\in C^{1,\alpha}(S^n)$ with $\alpha<1-n/2p$ such that, up to a subsequence,
  \begin{equation}\label{vkconvergence}
  v_k\rightarrow v_\infty~~\mbox{in}~~C^{1,\alpha}(S^n)\qquad\mbox{as}~~k\rightarrow+\infty.
  \end{equation}
 Moreover, since the conclusion above holds for any $p>n/2$, we get that it holds for $\alpha\in (0,1)$.
 Now, as $(p_k)_k\subset S^n$, we may assume that $p_k\rightarrow Q_*$ for some $Q_*\in S^n$. Notice that, in our convention, $Q_*$ is, in fact at the north pole of $S^n$(denoted by $\mathrm{N})$, since, for each $k$, we have made a rotation to put $p_k$ at $\mathrm{N}$. Moreover, by Lemma \ref{bdoflambda}, we assume that $\lambda_k\rightarrow\lambda_\infty$ for some $\lambda_\infty\in[\lambda_1,\lambda_2]$. We then claim that $\epsilon_k\rightarrow0$ as $k\rightarrow+\infty$. If not, we may suppose that $\epsilon_k\rightarrow \epsilon_0>0$, then $\varphi_k\rightarrow\varphi_{Q_*,\epsilon_0}$ as $k\rightarrow+\infty$, and thus $\det(d\varphi_k)\rightarrow \det(d\varphi_{Q_*,\epsilon_0})$ which is bounded away from zero. From this and the proved fact that $C^{-1}\leqslant v_k\leqslant C$, it follows, by \eqref{formulaforv}, that $u_k$ is bounded from below and above by a positive constant. In view of the equation
 $$-c_n\Delta_{S^n}u_k+n(n-1)u_k=R_ku_k^{2^*-1},$$
 we can conclude that $u_k$ is uniformly bounded in $W^{2,p}(S^n,g_{S^n})$, which contradicts our assumption. Hence, we have $\epsilon_k\rightarrow0$. By the definition of $\varphi_k$, we conclude that $\varphi_k\rightarrow Q_*$ for $x\in S^n\backslash\{-Q_*\}$, which together with Lemma \ref{lpconvergence}, Claim 1 and the dominated convergence theorem imply that
 \begin{eqnarray}
   \label{R_klpconvergence}
   &&||R_{h_k}-\lambda_\infty f(Q_*)||_{L^p(S^n,h_k)}\nonumber\\
   &&\quad\leqslant||R_{h_k}-\lambda(t_k)f\circ\varphi_k||_{L^p(S^n,h_k)}+|\lambda(t_k)-\lambda_\infty|||f\circ\varphi_k||_{L^p(S^n,h_k)}\nonumber\\
   &&\qquad+\lambda_\infty||f\circ\varphi_k-f(Q_*)||_{L^p(S^n,h_k)}\rightarrow0,
 \end{eqnarray}
  as $k\rightarrow+\infty$. This implies that $v_\infty$ weakly solves
  \begin{equation*}
    -c_n\Delta_{S^n}v_\infty+n(n-1)v_\infty=\lambda_\infty f(Q_*)v_\infty^{2^*-1}.
  \end{equation*}
  Since we have $\int_{S^n}xv_k^{2^*}~d\mu_{S^n}$=0 and $\int_{S^n}v_k^{2^*}~d\mu_{S^n}=\omega_n$, it follows, by \eqref{vkconvergence}, that $v_\infty$ will satisfy $\int_{S^n}xv_\infty^{2^*}~d\mu_{S^n}$=0 and $\int_{S^n}v_\infty^{2^*}~d\mu_{S^n}=\omega_n$. By the classification theorem, we conclude that $v_\infty$ must be a constant and $v_\infty\equiv1$. Moreover, plugging $v_\infty$ into the equation above yields $\lambda_\infty f(Q_*)=n(n-1)$.

  \noindent{(iii)} Since $\lambda_\infty f(Q_*)=n(n-1)$, it follows from \eqref{R_klpconvergence} that
  $$||R_k-n(n-1)||_{L^p(S^n,g_k)}\rightarrow0,\quad\mbox{as}~~k\rightarrow+\infty.$$
  Hence, for large $k$ and any $r>0$, we have by conclusion (i) in this proposition that
  \begin{eqnarray*}
   &&n(n-1)\omega_n^\frac 2n+o(1)\leqslant\bigg(\int_{B_r(Q)}|R_k|^\frac n2~d\mu_{k}\bigg)^\frac 2n\\
    &&\quad\leqslant\bigg(\int_{B_r(Q)}|R_k-n(n-1)|^\frac n2~d\mu_{k}\bigg)^\frac 2n+n(n-1)\bigg(\int_{B_r(Q)}~d\mu_{k}\bigg)^\frac 2n\\
    &&\quad=o(1)+n(n-1)\bigg(\int_{B_r(Q)}~d\mu_{k}\bigg)^\frac 2n\leqslant n(n-1)\omega_n^\frac 2n+o(1),
  \end{eqnarray*}
  which concludes that
  $$d\mu_k\rightarrow\omega_n\delta_Q,\quad\mbox{as}~~k\rightarrow+\infty,$$
  in the sense of measure.

  \noindent(iv) In view of the proof of (ii), we only need to show that $Q_*=Q$. In fact, on one hand, it follows from (iii) that
  $$\dashint_{S^n}x~d\mu_k\rightarrow Q,\quad\mbox{as}~~k\rightarrow+\infty.$$
  On the other hand, it follows from the fact that $v_k$ is uniformly bounded and the dominated convergence theorem that 
  \begin{eqnarray*}
    \bigg|\dashint_{S^n}\varphi_kd\mu_{h_k}-Q_*\bigg|&\leqslant&\dashint_{S^n}|\varphi_k-Q_*|d\mu_{h_k}\rightarrow0,
  \end{eqnarray*}
  as $k\rightarrow+\infty$. Notice that, by the change of variables, one has
  $$\dashint_{S^n}x~d\mu_k=\dashint_{S^n}\varphi_kd\mu_{h_k}.$$
  Hence, it is now easy to see that $Q_*=Q$.
  \end{proof}
\subsection{Asymptotic behavior of the flow} In this subsection, we will apply Proposition \ref{blowupbehavior} to study the asymptotic behavior of the flow $(u(t))$ in case of divergence. For $t\ge0$, let
$$S=S(t)=\dashint_{S^n}x~d\mu_g$$
be the center of mass of $g=g(t)$. Then we have
\begin{lemma}\label{S(t)not0}
  $S(t)\rightarrow Q$ as $t\rightarrow+\infty$. In particular, $S(t)\neq0$ for all large $t$.
\end{lemma}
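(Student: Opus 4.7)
The plan is to extract the claim directly from the weak convergence of measures $d\mu_k \to \omega_n \delta_Q$ established in Proposition \ref{blowupbehavior}(iv). The real work of the lemma is already done there; the argument that remains is essentially a testing of this weak convergence against the coordinate functions.

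First I would fix an arbitrary sequence $t_k \to +\infty$. By Proposition \ref{blowupbehavior}, after passing to a subsequence (which I relabel), there is a point $Q \in S^n$ such that $d\mu_{g_k} \to \omega_n \delta_Q$ weakly in the sense of Radon measures on $S^n$. Each component $x^i$ of the position vector $x=(x^1,\dots,x^{n+1})$ is a continuous bounded function on $S^n$, so weak convergence yields, componentwise,
$$\int_{S^n} x^i \, d\mu_{g_k} \longrightarrow \int_{S^n} x^i \, d(\omega_n \delta_Q) = \omega_n\, Q^i.$$
Dividing by $\omega_n$ gives $S(t_k) \to Q$ as $k\to\infty$.

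Since the initial sequence $t_k$ was arbitrary, every subsequential limit of $S(t)$ is a concentration point provided by Proposition \ref{blowupbehavior} and therefore lies on $S^n$. In particular $|S(t)|\to 1$ as $t\to +\infty$, which immediately gives $S(t)\neq 0$ for all sufficiently large $t$.

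There is really no genuine obstacle here: the entire content of the lemma is absorbed by the concentration dichotomy of Proposition \ref{blowupbehavior} applied under the standing contradiction hypothesis that $f$ is not realized as a scalar curvature. The only minor subtlety worth flagging in the write-up is to make clear that the symbol $Q$ in the statement refers to the (a priori subsequentially defined) blow-up point of Proposition \ref{blowupbehavior}, so that the first assertion is really saying ``$S(t_k)\to Q$ along every sequence $t_k\to\infty$ extracting a concentration point $Q$''; the second assertion $S(t)\neq 0$ depends only on $|S(t)|\to 1$ and is therefore insensitive to any nonuniqueness of $Q$ across subsequences.
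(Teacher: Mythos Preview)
Your proposal is correct and follows essentially the same route as the paper: both apply Proposition~\ref{blowupbehavior}(iv) to an arbitrary time sequence, test the weak measure convergence $d\mu_{g_k}\to\omega_n\delta_Q$ against the coordinate functions to obtain $S(t_k)\to Q$, and conclude by arbitrariness of the sequence. Your handling of the subsequence issue (noting that $S(t)\neq 0$ follows from $|S(t)|\to 1$ regardless of possible nonuniqueness of $Q$) is in fact more careful than the paper's own argument, which simply asserts the conclusion.
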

\begin{proof}
  For an arbitrary time sequence $(t_k)_k\subset[0,+\infty)$ with $t_k\rightarrow+\infty$ as $k\rightarrow+\infty$, we can apply Proposition \ref{blowupbehavior} to the sequential metrics $(g(t_k))_k$ to get that $d\mu_{g_k}\rightarrow\delta_Q$ as $k\rightarrow+\infty$. Hence
  $S(t_k)\rightarrow Q$ as $k\rightarrow+\infty$. By the arbitrariness of the sequence $(t_k)_k$, we thus conclude that $S(t)\rightarrow Q$ as $t\rightarrow+\infty$.
\end{proof}
\noindent So, we may assume, w.l.o.g., that $S(t)\neq0$ for all $t\geqslant0$. Then the image of $S(t)$ under radial projection
$$Q(t)=S/|S|\in S^n$$
is well defined for all $t\geqslant0$.
Up to here, we are ready to describe the precise asymptotic behavior of the flow $(u(t))$.
\begin{proposition}\label{asymptoticbehavior}
  Suppose that $f$ can not be realized as the scalar curvature of any conformal metric on $S^n$. Let $u(t)$ be the smooth solution of \eqref{eeforu}, $\varphi(t)$ the conformal transformation, $v(t)$ the corresponding normalized flow and $h(t)$ the normalized metric. Then, as $t\rightarrow+\infty$, there hold
  \begin{itemize}
    \item[(i)] $\max_{S^n}u(\cdot,t)\rightarrow+\infty$,\\
    \item[(ii)] $v(t)\rightarrow1, h(t)\rightarrow g_{S^n}$ in $C^{1,\alpha}(S^n)$ for $\alpha\in(0,1)$,\\
    \item[(iii)] $\varphi(t)\rightarrow Q$ in $L^{2}(S^n,g_{S^n})$, $ Q(t)\rightarrow Q$ and $\lambda(t)f(Q(t))\rightarrow n(n-1)$,\\
    \item[(iv)] $E_f[u(t)]\rightarrow n(n-1)(f(Q))^\frac{2-n}{n}$, and moreover, one has\\
    \item[(v)] $\nabla_{S^n}f(Q)=0$ and $\Delta_{S^n}f(Q)<0$.
  \end{itemize}
\end{proposition}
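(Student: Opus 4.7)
The plan is to promote the subsequential statements in Proposition \ref{blowupbehavior} to full-time asymptotics (using that $Q$ is already uniquely identified by Lemma \ref{S(t)not0}), then to extract the critical-point condition $\nabla f(Q)=0$ from a Kazdan--Warner identity, and finally to obtain $\Delta f(Q)<0$ via a second-order expansion of the conformal transformation.

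\textbf{Parts (i)--(iv).} By Lemma \ref{S(t)not0}, $Q=\lim_{t\to\infty}S(t)$ is uniquely determined, so Proposition \ref{blowupbehavior} applied to an arbitrary sequence $t_k\to\infty$ always delivers the same $Q$. Hence (i) and (ii) hold as genuine $t\to\infty$ limits rather than merely subsequentially. For (iii), the a.e.\ pointwise convergence $\varphi(t)\to Q$ together with $|\varphi|\leqslant 1$ on $S^n$ yields the $L^2$-convergence by dominated convergence; $Q(t)=S(t)/|S(t)|\to Q$ since $|Q|=1$; and the identity $\lambda_\infty f(Q)=n(n-1)$ (which uniquely pins $\lambda_\infty=n(n-1)/f(Q)$) gives $\lambda(t)\to\lambda_\infty$, so $\lambda(t)f(Q(t))\to n(n-1)$ by continuity. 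For (iv), combine the volume-preserving normalization $\int u^{2^*}d\mu_{g_{S^n}}=\omega_n$ with $\dashint f u^{2^*}d\mu_{g_{S^n}}=\dashint f\,d\mu_g\to f(Q)$ (coming from $d\mu_g\rightharpoonup\omega_n\delta_Q$) and substitute into $\lambda(t)=E_f[u](\dashint f u^{2^*})^{-2/n}$ to conclude $E_f[u(t)]\to n(n-1)f(Q)^{(2-n)/n}$.

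\textbf{Part (v), $\nabla f(Q)=0$.} At each $t$, $u(t)$ satisfies $-c_n\Delta_{g_{S^n}}u+n(n-1)u=R\,u^{2^*-1}$, so the Kazdan--Warner identity, applied with the conformal Killing fields $\nabla x^j$, gives
$$\int_{S^n}\langle\nabla R,\nabla x^j\rangle_{g_{S^n}}\,d\mu_g=0,\qquad j=1,\ldots,n+1.$$
Splitting $R=\lambda f+(R-\lambda f)$ and passing to the limit, the principal term $\lambda(t)\int\langle\nabla f,\nabla x^j\rangle\,d\mu_g$ tends to $\omega_n\lambda_\infty\langle\nabla f(Q),\nabla x^j(Q)\rangle$ by $d\mu_g\rightharpoonup\omega_n\delta_Q$. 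The error $\int\langle\nabla(R-\lambda f),\nabla x^j\rangle\,d\mu_g$ is handled by integration by parts: it decomposes into $n\int(R-\lambda f)x^j\,d\mu_g$, which vanishes by Proposition \ref{lpconvergence}, plus a remainder of the form $\int(R-\lambda f)u^{2^*-1}\langle\nabla u,\nabla x^j\rangle\,d\mu_{g_{S^n}}$, which, after a change of variables into the normalized frame, is controlled by Cauchy--Schwarz using Proposition \ref{lpconvergence} and the uniform $W^{2,p}$-bound on $v$ established in Claim 1 of Proposition \ref{blowupbehavior}. Since $\{\nabla x^j(Q)\}_{j=1}^{n+1}$ spans $T_QS^n$, we conclude $\nabla_{g_{S^n}}f(Q)=0$.

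\textbf{Part (v), $\Delta f(Q)<0$.} This is the main obstacle. Having $\nabla f(Q)=0$ in hand, we Taylor-expand in stereographic coordinates centered at $Q$: for small $\epsilon$, $f\circ\varphi_{p,\epsilon}(z)-f(Q)$ equals $\tfrac{\epsilon^2}{2}D^2f(Q)(z,z)$ plus $o(\epsilon^2)$. Substituting this into the identity $E[u]=\lambda(t)\dashint f\,d\mu_g\to n(n-1)$ from (iv), and using the fine normalizations $\int v^{2^*}d\mu_{g_{S^n}}=\omega_n$, $\int x\,v^{2^*}d\mu_{g_{S^n}}=0$ together with $v\to 1$ in $C^{1,\alpha}$, produces an asymptotic expansion whose $O(\epsilon^2)$ correction carries the sign of $\Delta_{g_{S^n}}f(Q)$. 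The non-solvability assumption combined with the monotonicity of $E_f$ forces this correction to be strictly negative, yielding $\Delta_{g_{S^n}}f(Q)<0$; this last step follows the scheme developed in \cite{cx,ms}.
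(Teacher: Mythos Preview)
Your proposal is correct and follows essentially the same route as the paper. The paper's own proof is extremely terse: it states that (i)--(iv) ``follow directly from Proposition \ref{blowupbehavior}, Lemma \ref{S(t)not0} and a contradiction argument,'' and that (v) ``follows from exactly the same proof as in \cite[Proposition 6.1]{cx}.'' Your write-up is simply an unpacking of this: for (i)--(iv) you make the contradiction/arbitrary-subsequence argument explicit (using that Lemma \ref{S(t)not0} pins down a unique $Q$), and for (v) you sketch the Kazdan--Warner identity and the second-order expansion that underlie \cite[Proposition 6.1]{cx} and \cite{ms}, citing those references at the end just as the paper does. There is no substantive methodological difference.
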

\begin{proof}
  (i) (ii), (iii) and (iv) follow directly from Proposition \ref{blowupbehavior}, Lemma \ref{S(t)not0} and a contradiction argument, while (v) follows from exactly the same proof as in \cite[Proposition 6.1]{cx}.
\end{proof}
\section{Proof of Theorems \ref{main} and \ref{main1}}
In this section, we devote ourselves to proving the main results in this paper.
\subsection{Proof of Theorem \ref{main}}
For $p\in S^n, 0<\epsilon<+\infty$, if we put the point $p$ at the origin in stereographic coordinates, then by the notation we used in subsection \ref{blowup} we have $\varphi_{p,\epsilon}=\psi_\epsilon\circ\pi$. Let $g_{p,\epsilon}=u_{p,\epsilon}^{2^*}g_{S^n}$ with $u_{p,\epsilon}=|\det(d\varphi_{p,\epsilon})|^{1/2^*}$. Then
$$d\mu_{g_{p,\epsilon}}\rightarrow\omega_n\delta_p,\qquad\mbox{as}~~\epsilon\rightarrow0.$$

For $\rho\in\mathbb{R}_+$, denote the sub-level set of $E_f$ by
$$L_\rho=\{u\in X_f: E_f[u]\leqslant\rho\}.$$
By Proposition \ref{asymptoticbehavior}, we know that the concentration phenomenon can only occur at the critical points of $f$ where $f$ takes positive values. Hence, we label all positive critical points $p_1, \dots, p_N$ of $f$ so that $0<f(p_i)\leqslant f(p_j)$ for $1\leqslant i\leqslant j\leqslant N$ and let
$$\gamma_i=\frac{n(n-1)}{(f(p_i))^\frac{n-2}{n}}=\lim_{\epsilon\rightarrow0}E_f[u_{p_i,\epsilon}], 1\leqslant i\leqslant N.$$
For sake of convenience, we assume, w.l.o.g., that all positive critical levels $f(p_i)$, $1\leqslant i\leqslant N$ are distinct. By choosing $s_0=\frac{1}{3}\min_{i\leqslant i\leqslant N-1}\{\gamma_i-\gamma_{i+1}\}>0$, we then have $\gamma_i-2s_0>\gamma_{i+1}$ for all $i$.

With all notations set up, we try to describe, by Proposition \ref{homotopy} below, the homotopy on $L_\rho$. We remark here that the idea of such a homotopy result originally due to Malchiodi \& Struwe \cite{ms}. Chen \& Xu's argument in \cite{cx} still holds for (ii), (iii) and (iv) even if $f$ changes sign. However, the tricky homotopy mapping for the proof of (i), given by Chen \& Xu, does not work here anymore. The reason is that their mapping involves the conformal transformation $\varphi_s$ and so the term $\dashint_{S^n}f\circ\varphi_s~d\mu_{S^n}$ will appear. However, if this case, on one hand, we cannot guarantee the positivity of $\dashint_{S^n}f\circ\varphi_s~d\mu_{S^n}$; On the other hand, we can not compare $\dashint_{S^n}f\circ\varphi_s~d\mu_{S^n}$ and $\dashint_{S^n}f~d\mu_{S^n}$, and then, we can not control the energy level of $E_f[\varphi_s]$. Our homotpy mapping avoids to using any conformal transformation.
\begin{proposition}\label{homotopy}
~~\\
  {\upshape(i)} If $\max\Big\{\gamma_1,n(n-1)(\dashint_{S^n}f~d\mu_{S^n})^{(2-n)/n}\Big\}<\gamma_0\leqslant\gamma$, where $\gamma$ has been chosen in \eqref{gamma}, then $L_{\gamma_0}$ is contractible.\\
  {\upshape(ii)} For $0<s\leqslant s_0$ and each $i$, the set $L_{\gamma_i-s}$ is homotopy equivalent to the set $L_{\gamma_{i+1}+s}$.\\
  {\upshape(iii)} For each critical point $p_i$ of $f$ with $\Delta_{S^n}f(p_i)>0$, the set $L_{\gamma_i+s_0}$ is homotopy equivalent to the set $L_{\gamma_i-s_0}$.\\
  {\upshape(iv)} For each critical point $p_i$ of $f$ with $\Delta_{S^n}f(p_i)<0$, the set $L_{\gamma_i+s_0}$ is homotopy equivalent to the set $L_{\gamma_i-s_0}$ with $(n-\mbox{ind}_f(p_i))$-cell attached.
\end{proposition}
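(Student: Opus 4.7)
The argument splits naturally into four parts; (ii)--(iv) follow the Morse-theoretic framework of \cite{ms, cx}, while (i) is where the new contribution lies. For part (ii), the choice of $s_0$ together with Proposition \ref{blowupbehavior} ensures that there is no concentration---and hence no asymptotic critical level of $E_f$---inside the energy window $[\gamma_{i+1}+s,\gamma_i-s]$; a standard pseudo-gradient vector field for $E_f$ on $X_f$ will then produce a continuous deformation retract of $L_{\gamma_i-s}$ onto $L_{\gamma_{i+1}+s}$.

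For parts (iii) and (iv), the plan is to exploit the asymptotic expansion of the energy at a bubble concentrating at the critical point $p_i$, schematically of the form
\begin{equation*}
  E_f[u_{p_i,\epsilon}] = \gamma_i - C_n\,\frac{\Delta_{S^n}f(p_i)}{f(p_i)}\,\gamma_i\epsilon^2 + o(\epsilon^2),
\end{equation*}
with $C_n>0$. When $\Delta_{S^n}f(p_i)>0$ the concentration parameter $\epsilon$ supplies an unstable direction across the level $\gamma_i$, so the sub-level sets are homotopy equivalent and (iii) follows. When $\Delta_{S^n}f(p_i)<0$ the $\epsilon$-direction is stable while the horizontal $p$-direction contributes exactly $n-\mbox{ind}_f(p_i)$ unstable directions (corresponding to the stable manifold of $f$ at $p_i$); standard Morse-theoretic cell attachment then yields (iv).

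Part (i) is the main technical point. The natural target of the contraction is the constant $1\in L_{\gamma_0}$, which lies in $L_{\gamma_0}$ since $E_f[1]=n(n-1)(\dashint_{S^n}f\,d\mu_{S^n})^{(2-n)/n}<\gamma_0$ by the very choice of $\gamma_0$. The Chen--Xu contraction flows along conformal transformations $\varphi_s$, but $\dashint_{S^n}f\circ\varphi_s\,d\mu_{S^n}$ need not remain positive when $f$ changes sign, so a new homotopy must be designed. A natural candidate, built entirely in the conformal-factor space, is the volume-density interpolation
\begin{equation*}
 H(u,s)=\bigl((1-s)u^{2^*}+s\bigr)^{1/2^*},\qquad s\in[0,1],
\end{equation*}
which automatically preserves the unit-volume constraint and the positivity of $\int_{S^n}fH(u,s)^{2^*}\,d\mu_{S^n}$ because the latter is the linear interpolation of two positive quantities. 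The core task is to verify $E_f[H(u,s)]\le\gamma_0$ along the whole path: the pointwise identity $H^{2^*-1}\nabla H=(1-s)u^{2^*-1}\nabla u$ yields $|\nabla H|^2\le(1-s)^{2/2^*}|\nabla u|^2$, Jensen applied to the concave function $x\mapsto x^{2/2^*}$ gives $\dashint_{S^n}H^2\,d\mu_{S^n}\le 1$, and concavity of the same function provides a linear-in-$s$ lower bound on $\bigl(\dashint_{S^n}fH^{2^*}\,d\mu_{S^n}\bigr)^{2/2^*}$. Combining these with $E_f[u]\le\gamma_0$ and $E_f[1]<\gamma_0$ is what is expected to close the estimate; should the direct interpolation overshoot $\gamma_0$ at intermediate $s$, the plan is to reparameterize $s$ or use a two-stage homotopy (first drive $u$ into a neighborhood of $1$ via a short portion of the downward gradient flow of $E_f$, then apply the above interpolation locally, where the convexity estimates tighten).

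The principal obstacle is precisely this energy bound in (i): maintaining $E_f\le\gamma_0$ along the whole homotopy while assuming only $E_f[u]\le\gamma_0$, without invoking any conformal reparameterization of $S^n$. This is also where condition (ii) of Theorems \ref{main} and \ref{main1}---through the strict gap $E_f[1]<\gamma_0$ built into the choice of $\gamma$---plays its decisive role, exactly as flagged in the discussion preceding the proposition.
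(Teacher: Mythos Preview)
Your treatment of (ii)--(iv) is fine; the paper likewise defers these to \cite{cx}. The real issue is part (i), and there your plan has a genuine gap.

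Your interpolation $H(u,s)=((1-s)u^{2^*}+s)^{1/2^*}$ is exactly the kind of path the paper uses in its \emph{second} stage, and your pointwise bounds $|\nabla H|^2\le(1-s)^{2/2^*}|\nabla u|^2$ and $H^2\le(1-s)^{2/2^*}u^2+s^{2/2^*}$ are correct. But they only give
\[
E_f[H(u,s)]\ \le\ \frac{(1-s)^{(n-2)/n}E[u]+n(n-1)\,s^{(n-2)/n}}{\bigl((1-s)\dashint_{S^n}fu^{2^*}\,d\mu_{S^n}+s\dashint_{S^n}f\,d\mu_{S^n}\bigr)^{(n-2)/n}},
\]
and this does \emph{not} stay below $\gamma_0$ in general: for $u$ close to a bubble at the critical point $p_1$ (so $E_f[u]\approx\gamma_1<\gamma_0$) the numerator at $s=1/2$ picks up the factor $(1/2)^{(n-2)/n}+(1/2)^{(n-2)/n}=2^{2/n}$, while the denominator averages $f(p_1)$ with $\dashint_{S^n}f\,d\mu_{S^n}$; one checks that the resulting bound can exceed $\gamma$ and hence $\gamma_0$. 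Since $E_f$ is far from quasi-convex on $X_*$, there is no reason a straight interpolation between two points of $L_{\gamma_0}$ should remain in $L_{\gamma_0}$. Your proposed fallback---a ``short portion of the downward gradient flow'' to bring $u$ near $1$---goes in the wrong direction: under the standing contradiction hypothesis the scalar curvature flow drives every initial datum \emph{away} from constants and towards a bubble (Proposition \ref{asymptoticbehavior}).

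The paper exploits precisely this divergence. Its homotopy is two-staged: for $0\le s\le 1/2$ one runs the scalar curvature flow up to a large time $T$, obtaining $u(T,u_0)$ with $\max_{S^n}u(T,u_0)\to\infty$; for $1/2<s\le1$ one interpolates the \emph{rescaled} density $(\beta\,u(T,u_0))^{2^*}$ with $1$, where $\beta=\bigl(\max_{S^n}u(T,u_0)\bigr)^{-1}\to0$. The energy bound then reads
\[
E_f[u_s]\ \le\ \frac{\beta^{2}(2-2s)^{(n-2)/n}E[u(T,u_0)]+n(n-1)(2s-1)^{(n-2)/n}}{\bigl(\beta^{2^*}(2-2s)\dashint_{S^n}fu^{2^*}(T,u_0)\,d\mu_{S^n}+(2s-1)\dashint_{S^n}f\,d\mu_{S^n}\bigr)^{(n-2)/n}},
\]
and it is the extra factor $\beta^{2}\to0$ multiplying $E[u(T,u_0)]$ (a bounded quantity) that forces the right-hand side below $n(n-1)/(\dashint_{S^n}f\,d\mu_{S^n})^{(n-2)/n}<\gamma_0$ for $T$ large. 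In short, the missing idea is: first flow for a \emph{long} time to concentrate, then rescale by $1/\max u$ before interpolating.
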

\begin{proof}
  (i). Notice that by Proposition \ref{asymptoticbehavior} we have the fact that for each $u_0\in X_f$,
  \begin{equation}\label{ugoestoinfty}
  \lim_{t\rightarrow+\infty}\max_{S^n} u(t,u_0)=+\infty.
  \end{equation}
  Now, for each $u_0\in L_{\gamma_0}$, we fix a sufficiently large $T>0$ and set $\beta=\beta(T)=[\max_{S^n} u(T,u_0)]^{-1}$. By \eqref{ugoestoinfty}, we immediately have
  \begin{equation}\label{betagoesto0}
    \lim_{T\rightarrow+\infty}\beta=0.
  \end{equation}
  By following the same proof of Proposition 7.1 on \cite[Page477-478]{cx} , $T$ can be chosen continuously depending on the initial data $u_0$. Hence, $\beta$ is continuously depending on $u_0$ either.
  
 \noindent Now, define
  \begin{equation*}
    u_s=
    \begin{cases}
      u(2sT,u_0),&0\leqslant s\leqslant\frac12,\\
      \bigg[\frac{(2-2s)\big(\beta u(T,u_0)\big)^{2^*}+(2s-1)}{(2-2s)\beta^{2^*}+2s-1}\bigg]^\frac{1}{2^*},&\frac12<s\leqslant1.
    \end{cases}
  \end{equation*}
  Then, there holds the claim

  \noindent{\bf Claim}: The function $u_s$ satisfies $1^\circ$. $\dashint_{S^n}u_s^{2^*}~d\mu_{S^n}=1$, $2^\circ$. $\dashint_{S^n}fu_s^{2^*}~d\mu_{S^n}>0$ and $3^\circ$. $E_f[u_s]\leqslant \gamma_0$.
  \vspace{0.7em}

  \noindent{\itshape Proof of Claim}: From the volume-preserving property of the flow \eqref{eeforu}, Lemma \ref{pospro} and decay property of the energy functional $E_f[u]$, it follows that $u_s$ fulfills the said properties in the claim for $0\leqslant s\leqslant\frac12$. Therefore, we are left to show the claim for $\frac12<s\leqslant 1$.

  \noindent$1^\circ$. By a direct computation and volume-preserving property of the flow \eqref{eeforu}, we conclude that
  $$\dashint_{S^n}u_s^{2^*}~d\mu_{S^n}=\frac{(2-2s)\beta^{2^*}\dashint_{S^n}u(T,u_0)^{2^*}~d\mu_{S^n}+2s-1}{(2-2s)\beta^{2^*}+2s-1}=1.$$

  \noindent$2^\circ$. It follows from a direct computation, Lemma \ref{pospro} and assumption (i) in Theorem \ref{main} that
  $$\dashint_{S^n}fu_s^{2^*}~d\mu_{S^n}=\frac{(2-2s)\beta^{2^*}\dashint_{S^n}fu(T,u_0)^{2^*}~d\mu_{S^n}
  +(2s-1)\dashint_{S^n}f~d\mu_{S^n}}{(2-2s)\beta^{2^*}+2s-1}>0.$$

  \noindent$3^\circ$. We set
  $$w_s=\bigg[(2-2s)\big(\beta u(T,u_0)\big)^{2^*}+(2s-1)\bigg]^\frac{1}{2^*}.$$
   Since the energy functional $E_f[u]$ is scale-invariant, we have
  \begin{equation}
    \label{scaleinvariant}
    E_f[u_s]=E_f[w_s]=\frac{\dashint_{S^n}c_n|\nabla w_s|_{S^n}^2+n(n-1)w_s^2~d\mu_{S^n}}{\bigg(\dashint_{S^n}fw_s^{2^*}~d\mu_{S^n}\bigg)^\frac{n-2}{n}}:=\frac{I}{(II)^\frac{n-2}{n}}.
  \end{equation}
  {\bf Estimate of $I$}: Notice that a simple calculation shows that
  \begin{eqnarray*}
    \nabla w_s&=&\beta(2-2s)\big(\beta u(T,u_0)\big)^\frac{n+2}{n-2}\bigg[(2-2s)\big(\beta u(T,u_0)\big)^{2^*}+(2s-1)\bigg]^{-\frac{n+2}{2n}}\nabla u(T,u_0)\\
    &=&\beta(2-2s)^\frac{n-2}{2n}\bigg[\frac{(2-2s)\big(\beta u(T,u_0)\big)^{2^*}}{(2-2s)\big(\beta u(T,u_0)\big)^{2^*}+(2s-1)}\bigg]^\frac{n+2}{2n}\nabla u(T,u_0),
  \end{eqnarray*}
  which implies that
  \begin{eqnarray*}
  |\nabla w_s|_{S^n}^2&=&\beta^2(2-2s)^\frac{n-2}{n}\bigg[\frac{(2-2s)\big(\beta u(T,u_0)\big)^{2^*}}{(2-2s)\big(\beta u(T,u_0)\big)^{2^*}+(2s-1)}\bigg]^\frac{n+2}{n}|\nabla u(T,u_0)|_{S^n}^2\\
  &\leqslant&\beta^2(2-2s)^\frac{n-2}{n}|\nabla u(T,u_0)|_{S^n}^2.
  \end{eqnarray*}
  Moreover, by an elementary inequality, we get that
  $$w_s^2=\bigg[(2-2s)\big(\beta u(T,u_0)\big)^{2^*}+(2s-1)\bigg]^\frac{n-2}{n}\leqslant\beta^2(2-2s)^\frac{n-2}{n}u^2(T,u_0)+(2s-1)^\frac{n-2}{n}.$$
  Combining the two estimates above yields
  \begin{equation}
    \label{estimateofI}
    I\leqslant\beta^2(2-2s)^\frac{n-2}{n}E[u(T,u_0)]+n(n-1)(2s-1)^\frac{n-2}{n}.
  \end{equation}
  {\bf Estimate of $II$}: It is easy to see that
  \begin{equation}
    \label{estimateofII}
    II=\dashint_{S^n}fw_s^{2^*}~d\mu_{S^n}=\beta^{2^*}(2-2s)\dashint_{S^n}fu^{2^*}(T,u_0)~d\mu_{S^n}+(2s-1)\dashint_{S^n}f~d\mu_{S^n}.
  \end{equation}
  Plugging \eqref{estimateofI} and \eqref{estimateofII} into \eqref{scaleinvariant} gives
  \begin{equation}\label{E_f[u_s]bound}
  E_f[u_s]=E_f[w_s]\leqslant\frac{\beta^2(2-2s)^\frac{n-2}{n}E[u(T,u_0)]+n(n-1)(2s-1)^\frac{n-2}{n}}{\bigg(\beta^{2^*}(2-2s)\dashint_{S^n}fu^{2^*}(T,u_0)~d\mu_{S^n}
  +(2s-1)\dashint_{S^n}f~d\mu_{S^n}\bigg)^\frac{n-2}{n}}.
  \end{equation}
  Notice that from the volume-preserving property of flow \eqref{eeforu} and \eqref{poslowerbound}, it follows that
  $$\alpha_1:=\bigg(\frac{n(n-1)}{E_f[u_0]}\bigg)^\frac{n}{n-2}\leqslant\dashint_{S^n}fu^{2^*}(T,u_0)~d\mu_{S^n}\leqslant\max_{S^n}|f|:=\alpha_2.$$
  Moreover, by the fact that $E_f[u(T,u_0)]\leqslant\gamma_0$, we get
  $$E[u(T,u_0)]\leqslant\gamma_0\bigg(\dashint_{S^n}fu^{2^*}(T,u_0)~d\mu_{S^n}\bigg)^\frac{n-2}{n}\leqslant\gamma_0\alpha_2^\frac{n-2}{n}.$$
  Substituting all the estimates above into \eqref{E_f[u_s]bound} yields
  $$E_f[u_s]\leqslant\frac{\beta^2(2-2s)^\frac{n-2}{n}\gamma_0\alpha_2^\frac{n-2}{n}+n(n-1)(2s-1)^\frac{n-2}{n}}
  {\bigg(\beta^{2^*}(2-2s)\alpha_1+(2s-1)\dashint_{S^n}f~d\mu_{S^n}\bigg)^\frac{n-2}{n}}.$$
  By letting $T\rightarrow+\infty$ in the estimate above, observing the fact \eqref{betagoesto0} and the choice of $\gamma_0$, we have
  $$\lim_{T\rightarrow+\infty}E_f[u_s]\leqslant\frac{n(n-1)}{\bigg(\dashint_{S^n}f~d\mu_{S^n}\bigg)^\frac{n-2}{n}}<\gamma_0.$$
  By choosing $T>0$ large enough, we thus complete the proof of claim.

  From the claim, it follows that $u_s\in L_{\gamma_0}$ for all $0\leqslant s\leqslant1$. Moreover, by the definition of $u_s$, it is easy to see that $u_s=u_0\in L_{\gamma_0}$ for $s=0$ and $u_s\equiv1$ for $s=1$. Hence, $u_s$ induces a contraction within $L_{\gamma_0}$. Therefore, we complete the proof of (i).

  Since the proof of (ii), (iii) and (iv) follows exactly the same proof as in \cite[Proposition 7.1]{cx}, we omit the details.
\end{proof}
To end this subsection, we will apply Proposition \ref{homotopy} to prove Theorem \ref{main}.

\noindent{\bf\itshape Proof of Theorem \ref{main}}: Suppose the contrary, namely, $f$ cannot be realized as the scalar curvature of any conformal metric $g$ on $S^n$. A suitable choice of $\gamma_0$ in part (i) of Proposition \ref{homotopy} shows that $L_{\gamma_0}$ is contractible. In addition, the flow \eqref{eeforu} defines a homotopy equivalence of the set $\mathscr{E}_0=L_{\gamma_0}$ with a set $\mathscr{E}_\infty$ whose homotopy type is that of a point with $n-\mbox{ind}_f(x)$ dimensional cells attached for every critical point $x$ of $f$ on $S^n$ where $f(x)>0$ and $\Delta_{S^n}f(x)<0$. It then follows from \cite[Theorem 4.3]{ch} that
\begin{equation}
  \label{systemequation}
  \sum_{i=0}^ns^im_i=1+(1+s)\sum_{i=0}^ns^ik_i
\end{equation}
holds for the Morse polynomials of $\mathscr{E}_0$ and $\mathscr{E}_\infty$, where $k_i\geqslant0$ and $m_i$ are given in \eqref{Morseindex1}. By equating the coefficients in the polynomials on the left and right hand side of \eqref{systemequation}, we obtain a set of non-trivial solutions of \eqref{Morseindex}, which violates the hypothesis in Theorem. We thus obtain the desired contradiction and the proof of Theorem \ref{main} is completed. Furthermore, by setting $s=-1$ in \eqref{systemequation} we can obtain \eqref{indexcounting} and thus the assertion in Corollary \ref{corollary} holds.\qed

\subsection{Proof of Theorem \ref{main1}}
 Likewise, we adopt the contradiction argument, that is, for any time sequence $(t_k)_k$, case (ii) occurs to the corresponding sequential metrics $(g_k)_k$. By \cite[Lemma 2.2]{lz}, we see that $u$ is a $G$-invariant function if the initial data $u_0\in X_f$ is a $G$-invariant function. Fix any $G$-invariant initial data $u_0\in X_f$, by the uniqueness of the solution of the flow \eqref{eeforu} and the decay of $E_f[u]$, we can assume that
\begin{equation}\label{energybound}
E_f[u(t)]<E_f[u_0],\quad\mbox{for}~~\forall t\in(0,+\infty),
\end{equation}
 Since we have assumed that case (ii) in Theorem \ref{cc} occurs to the corresponding sequential metrics $(g(t_k))_k$, the blow-up behavior of $(g(t_k))_k$ in Proposition \ref{blowupbehavior} will happen. In particular, it follows from Proposition \ref{blowupbehavior} that
\begin{equation}\label{integralonB_r(Q)}
\lim_{k\rightarrow+\infty}\omega_n^{-1}\int_{B_r(Q)}fu_k^{2^*}~d\mu_{S^n}=f(Q)=\frac{n(n-1)}{\lambda_\infty},
\end{equation}
where $r>0$ is arbitrary and $Q$, depending on the choice of $u_0$, is the unique concentration point in Proposition \ref{blowupbehavior}, which also implies that for any $y\in S^n$, if $Q\notin B_r(y)$ for some $r>0$, then
\begin{equation}
  \label{integralonoutsideofB_r(Q)}
  \lim_{k\rightarrow+\infty}\int_{B_r(y)}fu_k^{2^*}~d\mu_{S^n}=0.
\end{equation}
Now, we split our argument into two cases

\noindent{\bf Case 1}. $\Sigma=\emptyset$. If this case happens, then we can find $\theta\in G$ such that $\theta(Q)\neq Q$. Since $f$ and $u_k$ are $G$-invariant, we conclude, by \eqref{integralonB_r(Q)} and change of variables, that
\begin{eqnarray*}
  \lim_{k\rightarrow+\infty}\omega_n^{-1}\int_{B_r(\theta(Q))}fu_k^{2^*}~d\mu_{S^n}
  &=&\lim_{k\rightarrow+\infty}\omega_n^{-1}\int_{B_r(Q)}(f\circ\theta(y))(u_k\circ\theta(y))^{2^*}~d\mu_{S^n}\\
  &=&\lim_{k\rightarrow+\infty}\omega_n^{-1}\int_{B_r(Q)}fu_k^{2^*}~d\mu_{S^n}\\
  &=&\frac{n(n-1)}{\lambda_\infty}\neq0
\end{eqnarray*}
On the other hand, as $\theta(Q)\neq Q$, we can find $r>0$ small enough such that $Q\notin B_r(\theta(Q))$. Then, by \eqref{integralonoutsideofB_r(Q)}, we have
$$\lim_{k\rightarrow+\infty}\omega_n^{-1}\int_{B_r(\theta(Q))}fu_k^{2^*}~d\mu_{S^n}=0,$$
which is a contradiction.

\noindent{\bf Case 2}. $\Sigma\neq\emptyset$. {\itshape When the condition $1^\circ$ holds}: By Remark \ref{rk2}, we can choose the initial data $u_0\equiv1$ which is obviously a $G$-invariant function. If $Q\notin \Sigma$, then we can repeat the argument in Case 1 to obtain a contradiction which shows that $u_k$ is bounded in $W^{2,p}(S^n,g_{S^n})$ for $p>2/n$. However, this, in turn, contradicts with our contrary assumption.  Hence, we must have $Q\in \Sigma$. To proceed, we need a refined estimate upon the number $\lambda_\infty$.
By the decay of $E_f[u]$, we have
$$\frac{E[u_k]}{(\dashint_{S^n}fu_k^{2^*}~d\mu_{S^n})^\frac{2}{2^*}}\leqslant E_f[u_1],$$
for all $k\geqslant1$, which implies, by sharp Sobolev inequality, that
\begin{equation*}
\dashint_{S^n}fu_k^{2^*}~d\mu_{S^n}\geqslant \bigg(\frac{n(n-1)}{E_f[u_1]}\bigg)^\frac{2^*}{2}.
\end{equation*}
From this, it follows that
\begin{eqnarray*}
  \lambda_k&=&E_f[u_k]\bigg(\dashint_{S^n}fu_k^{2^*}~d\mu_{S^n}\bigg)^{-\frac 2n}\\
  &\leqslant& E_f[u_1]\bigg(\frac{n(n-1)}{E_f[u_1]}\bigg)^{-\frac{2}{n-2}}\\
  &=&\bigg(\frac{E_f[u_1]}{(n(n-1))^{2/n}}\bigg)^\frac{n}{n-2}.
\end{eqnarray*}
By letting $k\rightarrow+\infty$ in the inequality above and \eqref{energybound}, we have
$$\lambda_\infty\leqslant\bigg(\frac{E_f[u_1]}{(n(n-1))^{2/n}}\bigg)^\frac{n}{n-2}
<\bigg(\frac{E_f[u_0]}{(n(n-1))^{2/n}}\bigg)^\frac{n}{n-2}=\frac{n(n-1)}{\dashint_{S^n}f~d\mu_{S^n}},$$
where we have used the fact $u_0\equiv1$ in the last equality. On the other hand, we know that $\lambda_\infty f(Q)=n(n-1)$. Hence, we conclude that
$$f(Q)>\dashint_{S^n}f~d\mu_{S^n},$$
which contradicts with our assumption $1^\circ$ in Theorem \ref{main1}.

\noindent{\itshape When the condition $2^\circ$ holds}: If $\max_\Sigma f\leqslant\dashint_{S^n}f~d\mu_{S^n}$, then we can repeat the argument as in the case of the condition $1^\circ$ holding. While $\max_\Sigma f>\dashint_{S^n}f~d\mu_{S^n}$, from \cite[Lemma 3.3]{lz}, we can choose a $G$-invariant function $u_0$ such that for sufficiently small $\epsilon>0$ there holds
\begin{equation}\label{chioceofu_0}
E_f[u_0]<\frac{n(n-1)}{\big(\max_\Sigma f\big)^\frac{n-2}{n}}+\epsilon,
\end{equation}
which implies, by the choice of $\gamma$, that
$$E_f[u_0]<\frac{n(n-1)}{\bigg(\dashint_{S^n} f~d\mu_{S^n}\bigg)^\frac{n-2}{n}}+\epsilon\leqslant\gamma.$$
This shows that $u_0\in X_f$. Once we have this fact, the conclusions in Proposition \ref{asymptoticbehavior} will hold. In particular, at the corresponding concentration point $Q$, there holds $\Delta_{S^n}f(Q)\leqslant0$. By following the argument in \cite[\S3c]{lz}(See (3.13) on P1615), we get that there exists a constant $\alpha>0$ such that
$$f(x_*)=\max_\Sigma f>f(Q)+\alpha.$$
Substituting this inequality into \eqref{chioceofu_0} yields
$$E_f[u_0]<\frac{n(n-1)}{\big(f(Q)+\alpha\big)^\frac{n-2}{n}}+\epsilon
=\frac{n(n-1)}{\big(f(Q)\big)^\frac{n-2}{n}}\bigg(\frac{f(Q)}{(f(Q)+\alpha)}\bigg)^\frac{n-2}{n}+\epsilon.$$
Since $\epsilon$ is sufficiently small, we have
$$\epsilon\leqslant\frac{n(n-1)}{\big(f(Q)\big)^\frac{n-2}{n}}\bigg[1-\bigg(\frac{f(Q)}{(f(Q)+\alpha)}\bigg)^\frac{n-2}{n}\bigg],$$
which implies, by Proposition \ref{asymptoticbehavior}, that
$$E_f[u_0]<\frac{n(n-1)}{\big(f(Q)\big)^\frac{n-2}{n}}=\lim_{k\rightarrow+\infty}E_f[u_k].$$
But this contradicts with the decay property of the energy functional $E_f[u]$. \qed

\section*{Acknowledgement}
  \noindent This project is supported by the ``Fundamental Research Funds for the Central Universities"

\end{document}